\newdimen\figrasterwd
\theoremstyle{plain}
\newtheorem{lemma}{Lemma}
\newtheorem{proposition}{Proposition}
\theoremstyle{definition}
\newtheorem{definition}{Definition}
\theoremstyle{remark}
\newtheorem{remark}{Remark}
\title{Combining Gradient Information and Primitive Directions for High-Performance Mixed-Integer Optimization}
\author{
	\hspace{1mm}Matteo Lapucci \\
	Global Optimization Laboratory (GOL) \\
	Dip.\ di Ingegneria dell'Informazione \\
	Università di Firenze \\
	Via di S.\ Marta, 3, 50135, Firenze, Italy \\
	\texttt{matteo.lapucci@unifi.it} \\
	\And
	\hspace{1mm}Giampaolo Liuzzi \\
	Dip.\ di Ingegneria Informatica, \\
	Automatica e Gestionale ``Antonio Ruberti''\\
	Università di Roma ``La Sapienza'' \\
	Via Ariosto, 25, 00185, Roma, Italy \\
	\texttt{liuzzi@diag.uniroma1.it} \\
	\And
	\hspace{1mm}Stefano Lucidi \\
	Dip.\ di Ingegneria Informatica, \\
	Automatica e Gestionale ``Antonio Ruberti''\\
	Università di Roma ``La Sapienza'' \\
	Via Ariosto, 25, 00185, Roma, Italy \\
	\texttt{lucidi@diag.uniroma1.it} \\
	\And
	\hspace{1mm}Pierluigi Mansueto \\
	Global Optimization Laboratory (GOL) \\
	Dip.\ di Ingegneria dell'Informazione \\
	Università di Firenze \\
	Via di S.\ Marta, 3, 50135, Firenze, Italy \\
	\texttt{pierluigi.mansueto@unifi.it} \\
}
\begin{document}
\maketitle

\begin{abstract}
	In this paper we consider bound-constrained mixed-integer optimization problems where the objective function is differentiable w.r.t.\ the continuous variables for every configuration of the integer variables. We mainly suggest to exploit derivative information when possible in these scenarios: concretely, we propose an algorithmic framework that carries out local optimization steps, alternating searches along gradient-based and primitive directions. The algorithm is shown to match the convergence properties of a derivative-free counterpart. Most importantly, the results of thorough computational experiments show that the proposed method clearly outperforms not only the derivative-free approach but also the main alternatives available from the literature to be used in the considered setting, both in terms of efficiency and effectiveness.
\end{abstract}

\keywords{Mixed-integer nonlinear optimization \and Gradient-based optimization \and Primitive directions \and Global convergence}
\MSCs{90C11 \and 90C26 \and 90C30}

\section{Introduction}
\label{sec:introduction}
In this paper, we deal with nonlinear mixed-integer optimization problems of the form
\begin{equation}
	\label{eq:prob}
	\begin{aligned}
		\min_{x\in\mathbb{R}^n, z\in\mathbb{Z}^m}\;&f(x,z)\\
		\text{s.t. }&l_x\le x\le u_x\\
		&l_z\le z\le u_z,
	\end{aligned}
\end{equation}
where $l_x,u_x\in\mathbb{R}^n$, with $l_x\le u_x$, are bounds on the feasible values of the continuous variables $x$ and $l_z,u_z\in\mathbb{Z}^m$, with $l_z\le u_z$, are the bounds for the integer variables $z$. We denote by $\Omega_x$ and $\Omega_z$ the sets $\{x\in\mathbb{R}^n\mid l_x\le x\le u_x\}$ and $\{z\in\mathbb{Z}^m\mid l_z\le z\le u_z\}$, respectively, and $\Omega = \Omega_x\times \Omega_z$. Further, we assume that the function $f:\mathbb{R}^n\times\mathbb{R}^m\to \mathbb{R}$ is continuously differentiable w.r.t.\ the continuous variables, i.e., $\nabla_xf(x,z)$ exists and is continuous over $\Omega_x$ for any $z\in\Omega_z$. In what follows, the Euclidean norm in $\mathbb{R}^n$ will be denoted as $\|\cdot\|$, whereas we denote by $\Pi_C$ the euclidean projection operator onto the convex set $C\subseteq \mathbb{R}^n$. Finally, we denote by $\mathbf{1}_N$ and $\mathbf{0}_N$, with $N \in \mathbb{N}$, the $N$-dimensional vectors of all ones and
all zeros, respectively.

Real world problems presenting this structure are not uncommon. For instance, we can consider design problems where the values of integer variables induce the overall structure of some object or building. Once the structure is fixed, geometry and material properties can be varied with continuity, according to some well-known law, to obtain an overall result with the desired property \cite{liuzzi2012decomposition,Arroyo2009,Sobieszczanski-Sobieski1997}. Other examples of applications can be found for generalized Nash equilibrium \cite{Facchinei10} and diffusion processes statistical inference problems \cite{Pedersen01}.

Most algorithmic approaches from the literature to deal with nonlinear mixed-integer problems work under the assumption that the objective function is computed by a black-box oracle whose internal mechanisms are inaccessible \cite{giovannelli2022derivative,le2023taxonomy,Audet2017,BOUKOUVALA2016701,conn2009introduction,larson2019derivative}. However, as noted above, this is not necessarily the case; indeed, some frameworks do exploit this information when possible, see, e.g., \cite{BONAMI2008186, Exler2007}. In this paper, we underline that the employment of first-order information shall really be fostered, as it can lead to substantial improvements in both efficiency and effectiveness of black-box algorithmic frameworks. 

Here, we put particular emphasis on the \textit{Nonsmooth Derivative-Free Linesearch} (\texttt{DFNDFL}) framework from \cite{giovannelli2022derivative}, which combines:
\begin{itemize}
	\item local searches, based on derivative-free line searches along both special unitary directions and the coordinate directions, to carry out an exploration with respect to the continuous variables;
	\item local searches, along so-called primitive directions, to perform an optimization with respect to the integer variables.
\end{itemize}
The line search carried out along the dense direction in \texttt{DFNDFL} conveys properties on the initial point, whereas the discrete search conveys properties on the intermediate point where only continuous variables have been moved. The convergence of particular subsequences is then obtained by exploiting the fact that the steps used in the continuous search vanish in the limit.  Then, limit properties can be showed for the sequence $\{(x_{k},z_k)\}$.

According to the above principles, we could think of employing gradient-based line searches for the continuous part of the problem, when derivatives are available. In fact, from the one hand we underline that this change leads to no loss of convergence guarantees - although the analysis needs some nontrivial adjustments; on the other hand, we show by thorough computational experiments that the resulting algorithm exhibits much better performance than the original derivative-free counterpart.

We shall note that recently, in \cite{cristofari2024mixalm}, a related approach 
has been proposed for mixed-integer problems with general nonlinear constraints. The algorithm follows a similar scheme: it proceeds by alternating a first step of optimization of the continuous variables and a second step of exploration of the discrete ones. In particular, with respect to the continuous variables, the (approximate) minimization of the augmented Lagrangian function is required to be carried out at each iteration. This strategy allows to assess properties on the point obtained by moving the continuous variables. Then, limit properties can be proved for the sequence $\{(x_{k+1},z_k)\}$. 
A strategy of this kind could naturally be adopted also in the present unconstrained setting. However, in this paper we focus on a strategy more closely resembling that of \texttt{DFNDFL}: a single (derivative-based) line search is required to update the continuous variables, rather than a full minimization.  

{One of the key benefits of \texttt{DFL}-type algorithmic framework consists in the objective function being evaluated only at points with integer values for $z$ variables: no relaxation of the integrality constraint is ever required. This is crucial for all those tasks where integer variables $z$ denote physically integer quantity that are unrelaxable, i.e., measures of $f$ do not exist at all if non-integer values are provided in input for variables $z$.} 
{Among the methods that enjoy this property, the only ones, to the best of our knowledge, that exploit gradient information for continuous variables are the Mixed-Integer SQP approaches (\texttt{MISQP}) \cite{Exler2007}; however, these methods do not alternate local optimization steps w.r.t.\ $x$ and $z$ variables, but rather carry out joint optimization steps employing rough approximations of derivatives for integer variables. The resulting algorithm, however, does not possess any theoretical convergence guarantee.}

The rest of the paper is organized as follows. In Section \ref{sec:prelim}, we report some preliminary definitions and properties and we recall the \texttt{DFNDFL} algorithm which inspired the present work. The proposed algorithm to solve problem \eqref{eq:prob} is presented in Section \ref{sec:algo}. Then, in 
Section \ref{sec::convergence}, we analyze the convergence properties of the algorithm. 
Section \ref{sec::exp} is devoted to the numerical experiments. Finally, in Section \ref{sec:conclusion} we draw some conclusions and discuss possible lines of future research. Appendix \ref{app:L-BFGS-B} contains the formal proof that first step of the \texttt{L-BFGS-B} algorithm \cite{byrd95} is performed moving along a gradient-related direction, which is important to assess the convergence guarantees of the implemented version of the algorithm. 

\section{Preliminaries}
\label{sec:prelim}
The algorithmic framework considered in this manuscript makes use of directions that are completely zero in the continuous or in the integer part; in other words, variables update concern continuous variables only or integer variables only. 

Now, updates of integer variables exploit particular search directions, which we formally characterize hereafter.
\begin{definition}[{\cite[Def.\ 4]{liuzzi2020algorithmic}}]
	A \textit{vector} $v\in\mathbb{Z}^m$ is called \textit{primitive} if the greatest common divisor of its components is equal to 1.
\end{definition}
Local searches w.r.t.\ integer variables will be  carried out checking steps along some primitive vectors. This concept needs the following definition to be formalized.
\begin{definition}[{\cite[Def.\ 4]{giovannelli2022derivative}}]
	Given a point $(x,z)\in\Omega$, the \textit{set of feasible primitive directions} at $(x, z)$ with respect to $\Omega$ is given by
	$$\mathcal{D}^z(x,z) = \{d\in\mathbb{Z}^m\mid\ d \text{ is a primitive vector and }  (x,z+d)\in\Omega\}.$$
\end{definition}

We then provide some additional definitions in order to characterize local optimality and stationarity for problem \eqref{eq:prob}.
\begin{definition}[{\cite[Def.\ 5]{giovannelli2022derivative}}]
	Given a point $(\bar{x},\bar{z})\in\Omega$, the \textit{discrete neighborhood} of $(\bar{x},\bar{z})$ is defined as
	$$\mathcal{B}^z(\bar{x},\bar{z}) = \{(\bar{x},z)\in\Omega\mid z = \bar{z}+d,\;d\in\mathcal{D}^z(\bar{x}, \bar{z})\}.$$
\end{definition}
\begin{definition}[{\cite[Def.\ 6]{giovannelli2022derivative}}]
	Given a point $(\bar{x},\bar{z})\in\Omega$ and a scalar $\tau$, the continuous neighborhood of $(\bar{x},\bar{z})$ is given by 
	$$\mathcal{B}^x(\bar{x},\bar{z},\tau) =\{(x,\bar{z})\in\Omega\mid \|x-\bar{x}\|\le \tau\}.$$
\end{definition}
\begin{definition}[{\cite[Def.\ 7]{giovannelli2022derivative}}]
	A point $(\bar{x},\bar{z})\in\Omega$ is a \textit{local minimum point} of problem \eqref{eq:prob} if, for some $\epsilon>0$, 
	\begin{align*}
		f(\bar{x},\bar{z})\le f(x,\bar{z})\quad&\text{for all } (x,\bar{z})\in\mathcal{B}^x(\bar{x},\bar{z},\epsilon);\\
		f(\bar{x},\bar{z})\le f(\bar{x},z)\quad&\text{for all } (\bar{x},z)\in\mathcal{B}^z(\bar{x},\bar{z}).
	\end{align*}
\end{definition}

\begin{definition}
	\label{def::stat}
	A point $(\bar{x},\bar{z})\in\Omega$ is a stationary point of problem \eqref{eq:prob} if
	\begin{align*}
		&\nabla_x f(\bar{x},\bar{z})^\top(x - \bar{x}) \ge 0,\quad\text{for all } x \in \Omega_x;\\
		&f(\bar{x},\bar{z})\le f(\bar{x},z)\quad\qquad\quad\,\text{for all } (\bar{x},z)\in\mathcal{B}^z(\bar{x},\bar{z}).
	\end{align*}
\end{definition}

It is easy to observe that stationarity is actually a necessary condition for local optimality.
In the following, we also call a point $(\bar{x}, \bar{z}) \in \Omega$ satisfying the first condition of Definition \ref{def::stat} as \textit{stationary w.r.t.\ the continuous variables}.

Finally, we recall the notion of convergence to a point reported in \cite{Lapucci2023} for the mixed-integer scenario.
\begin{definition}[{\cite[Def.\ 3.1]{Lapucci2023}}]
	\label{def::convergence}
	A sequence $\{(x_k , y_k)\}$ converges to a point $(\bar{x}, \bar{y})$ if for any $\epsilon > 0$ there exists an index $k_\epsilon$ such that for all $k \ge k_\epsilon$ we have that $y_k = \bar{y}$ and $\|x_k - \bar{x}\| < \epsilon$.
\end{definition}

The \texttt{DFNDFL} approach from \cite{giovannelli2022derivative} is formally described in Algorithm \ref{alg::dfndfl}. The method consists of an alternate minimization w.r.t.\ continuous and discrete variables and, hence, is divided into two main phases.
At each iteration $k$ of the algorithm, minimization w.r.t.\ the continuous variables is first carried out starting at the current iterate $(x_k, z_k) \in \Omega$ (steps \ref{line::startpcs}-\ref{line::endpcs}). Then, the updated solution, named $(x_k^c, z_k)$, is used as starting point for the \textit{Discrete Search} (\texttt{DS}) phase (step \ref{line::ds}), where a set of primitive directions $D_k \subseteq \mathcal{D}^z(x_k^c, z_k)$ is investigated. Step \ref{line::ref} finally allows to define the next iterate $(x_{k+1}, z_{k + 1})$ as any solution improving $(x_k^c, z_k^d)$, making the framework quite flexible.

\begin{algorithm}[h]
	\caption{\texttt{DFNDFL}}
	\label{alg::dfndfl}
	\textbf{Input:}  $(x_0, z_0) \in \Omega$, $\xi_0 > 0$, $\delta \in (0, 1)$, $\gamma > 0$, $\eta \in (0, 1)$\\
	Choose $D_0 =\{v_1,\ldots,v_{|D_0|}\} \subseteq \mathcal{D}^z(x_0, z_0)$\\
	Let $\alpha_0^{D_0} = [\alpha_0^{v_1},\ldots,\alpha_0^{v_{|D_0|}}] = \boldsymbol{1}_{|D_0|}$ \\
	Let $\alpha_0^c = 1$ \\
	\For{$k = 0, 1,\ldots$}{
		Let $v_k^c \in \{v \in \mathbb{R}^n \mid \|v\| = 1\}$\label{line::startpcs}\\
		\If{$\exists \rho \in \{-1, 1\}$: $f\left(\Pi_{\Omega_x}\left(x_k + \rho\alpha_k^cv_k^c\right), z_k\right) \le f(x_k, z_k) - \gamma(\alpha_k^c)^2$}{
			$\alpha_{k + 1}^c = \max\limits_{h \in \mathbb{N}}\{\delta^{-h}\alpha_k^c\mid f\left(\Pi_{\Omega_x}\left(x_k + \rho\delta^{-h}\alpha_k^cv_k^c\right), z_k\right) \le f(x_k, z_k) - \gamma(\delta^{-h}\alpha_k^c)^2\}$\label{line::expansion}\\
			$x_k^c = \Pi_{\Omega_x}\left(x_k + \rho\alpha_{k + 1}^c v_k^c\right)$
		}
		\Else{
			$\alpha_{k + 1}^c = \eta\alpha_k^c$\label{line::reduction}\\
			$x_k^c = x_k$\label{line::endpcs}
		}
		$z_k^d$, $D_{k + 1}$, $\alpha_{k+1}^{D_{k+1}}$, $\xi_{k+1}$ = \texttt{DS}$\left(f, \Omega, (x_k^c, z_k), D_k, \alpha_k^{D_k}, \xi_k, \eta\right)$\label{line::ds}\\
		Find $(x_{k+1}, z_{k+1}) \in \Omega$ s.t. $f(x_{k+1}, z_{k+1}) \le f(x_k^c, z_k^d)$\label{line::ref}
	}
\end{algorithm}

\begin{algorithm}[!h]
	\caption{Discrete Search (\texttt{DS})}
	\label{alg::ds}
	\textbf{Input:} $(x, z) \in \Omega$, $D_k \subseteq \mathcal{D}^z(x, z)$, $\alpha_k^{D_k} \in \mathbb{R}^{|D_k|}$, $\xi_k > 0$, $\eta \in (0, 1)$\\
	Let $\alpha_{k+1}^{D_k} = \alpha_k^{D_k}$\\
	\ForAll{$v^d \in D_k$}{
		Let $\alpha_\text{ini} = \min\left\{\alpha_k^{v^d}, \max\limits_{\bar{\alpha} \in \mathbb{R}}\left\{\bar{\alpha}\mid z + \bar{\alpha}v^d \in \Omega_z\right\} \right\}$\label{line::startds}\\
		\If{$f\left(x, z+\alpha_\text{ini}v^d\right) \le f(x, z) - \xi_k$}{
			$\alpha_{k+1}^{v^d} = \max\limits_{h \in \mathbb{N}}\{2^h\alpha_\text{ini} \mid z + 2^h\alpha_\text{ini}v^d \in \Omega_z$ \textbf{and} $f\left(x, z + 2^h\alpha_\text{ini}v^d\right) \le f(x, z) - \xi_k\}$\label{line::alpha_d}\\
			$z^d = z + \alpha_{k+1}^{v^d}v^d$\label{line::endds}\\
			\Return $(x, z^d)$, $D_k$, $\alpha_{k+1}^{D_k}$, $\xi_k$\label{line::new_point}
		}
		\Else{
			$\alpha_{k+1}^{v^d} = \max\left\{1, \left \lfloor{\alpha_k^{v^d} / 2}\right \rfloor \right\}$\label{line::halved}
		}
	}
	\If{$\alpha_{k+1}^{D_k} \ne \mathbf{1}_{|D_k|}$}{
		\Return $(x, z)$, $D_k$, $\alpha_{k+1}^{D_k}$, $\xi_k$
	}
	\Else{
		$\xi_{k+1} = \eta\xi_k$\label{line::xi_reduction}\\
		\If{$D_k \subset \mathcal{D}^z(x, z)$}{
			Generate $D_{k+1}$ s.t. $D_k \subset D_{k+1} \subseteq \mathcal{D}^z(x, z)$\label{line::Denrich}\\
			Let $\alpha_{k+1}^{D_{k+1}} \in \mathbb{R}^{|D_{k+1}|}$ vector of items $\alpha^v = 1$ with $v \in D_{k+1}$\\
			\Return $(x, z)$, $D_{k+1}$, $\alpha_{k+1}^{D_{k+1}}$, $\xi_{k+1}$
		}
		\Else{
			\Return $(x, z)$, $D_k$, $\alpha_{k+1}^{D_k}$, $\xi_{k+1}$\label{line::D_full}
		}
	}
\end{algorithm}

The two key phases are based on line search algorithms exploring feasible search directions: the goal is to return a positive stepsize $\alpha$ which leads to a new point providing sufficient decrease of the objective function. If such a point can be determined, an expansion step is tried out to possibly obtain sufficient decrease with an even larger stepsize.  Otherwise, the initial tentative stepsize is reduced for the next iteration. 

For the continuous variables, these operations are expressed in steps \ref{line::expansion} and \ref{line::reduction}.
On the other hand, we report the detailed scheme of the \texttt{DS} phase in Algorithm \ref{alg::ds}, as it will be crucial for the rest of the paper. The discrete search (steps \ref{line::startds}-\ref{line::halved}) terminates as soon as a point providing sufficient decrease of the objective function is reached (step \ref{line::new_point}), or after all directions in $D_k$ have been processed. 
If the line search fails for some direction $v^d \in D_k$, its associated initial stepsize $\alpha_k^{v^d}$ is halved for the next \texttt{DS} execution (step \ref{line::halved}). 

Note that, for computational reasons, only a subset of the directions set $\mathcal{D}^z(x_k^c, z_k)$ is employed. 
Whenever the \texttt{DS} phase does not return a new point and each direction $v^d \in D_k$ is associated with a stepsize $\alpha_k^{v^d} = 1$, the sufficient decrease parameter $\xi_k$ is reduced and the set $D_k$ is enriched, if possible, with new primitive directions contained in $\mathcal{D}^z(x_k^c, z_k)$ (steps \ref{line::xi_reduction} and \ref{line::Denrich}).

For Algorithm \ref{alg::dfndfl}, some nice properties, including convergence results, have been proved.

\begin{lemma}[{\cite[Prop.\ 2]{giovannelli2022derivative}}]
	\label{lem::well-defined-d}
	Step \ref{line::alpha_d} of the Discrete Search phase is well-defined.
\end{lemma}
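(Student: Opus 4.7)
The statement asserts that the expansion loop in step \ref{line::alpha_d} returns a finite, well-defined maximum. My plan is to show two things: that the feasible set of candidate exponents $h$ is nonempty, and that it is bounded above, so that the supremum is attained (since $h$ ranges over $\mathbb{N}$).

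For nonemptiness, I would observe that whenever step \ref{line::alpha_d} is reached, the preceding \texttt{if} condition guarantees that $f(x, z + \alpha_\text{ini} v^d) \le f(x,z) - \xi_k$, and $\alpha_\text{ini}$ itself was chosen in step \ref{line::startds} as the minimum of $\alpha_k^{v^d}$ and the largest feasible step along $v^d$, so $z + \alpha_\text{ini} v^d \in \Omega_z$. Hence $h = 0$ satisfies both the feasibility and the sufficient-decrease requirement, so the candidate set contains at least one element.

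For boundedness, I would exploit the fact that $\Omega_z = \{z \in \mathbb{Z}^m \mid l_z \le z \le u_z\}$ is a bounded subset of $\mathbb{Z}^m$. Since $v^d$ is a primitive vector, it is nonzero, so $\|v^d\| \ge 1$. Therefore, as $h \to \infty$, we have $\|2^h \alpha_\text{ini} v^d\| \to \infty$, and eventually $z + 2^h \alpha_\text{ini} v^d \notin \Omega_z$. This means there exists some $\bar{h} \in \mathbb{N}$ such that for all $h > \bar{h}$, the point is infeasible, so the set of admissible exponents is contained in $\{0, 1, \ldots, \bar{h}\}$, which is finite.

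Combining the two observations, the set in step \ref{line::alpha_d} is a finite, nonempty subset of $\mathbb{N}$, hence its maximum is attained. I do not anticipate any subtle obstacle: the argument is essentially a compactness consideration on $\Omega_z$, and the only point worth stating explicitly is that primitive vectors are nonzero, which rules out the degenerate case where the expansion would stay on the same point forever.
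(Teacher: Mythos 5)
Your argument is correct and is essentially the standard one (the paper itself does not reprove this lemma but defers to Proposition~2 of the cited reference, which uses the same finiteness/compactness reasoning on $\Omega_z$): $h=0$ is admissible because the entry test already certifies sufficient decrease at the feasible point $z+\alpha_\text{ini}v^d$, and boundedness of $\Omega_z$ together with $v^d\neq 0$ caps the admissible exponents. The one degenerate case you do not mention is $\alpha_\text{ini}=0$, which can occur when $z$ lies on the boundary and $v^d$ points outward; there $2^h\alpha_\text{ini}v^d=0$ for every $h$ and your boundedness argument would fail. It is excluded for the same reason your nonemptiness step works: with $\alpha_\text{ini}=0$ the entry test reads $f(x,z)\le f(x,z)-\xi_k$ with $\xi_k>0$, which is impossible, so step~\ref{line::alpha_d} is never reached in that case and whenever it is reached one has $\alpha_\text{ini}>0$. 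With that remark added, the proof is complete.
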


\begin{lemma}[{\cite[Prop.\ 4]{giovannelli2022derivative}}]
	\label{lem::xi}
	Let $\{\xi_k\}$ be the sequence produced by \texttt{DFNDFL}. Then, $\lim_{k \rightarrow \infty}\xi_k = 0$.
\end{lemma}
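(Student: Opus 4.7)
The plan is to argue by contradiction. Since step \ref{line::xi_reduction} is the only place that modifies $\xi_k$ and it does so by multiplication by $\eta\in(0,1)$, the sequence $\{\xi_k\}$ is non-increasing and non-negative, hence convergent. Assume its limit is some $\bar{\xi}>0$; then, because every reduction shrinks $\xi_k$ by a fixed factor $\eta<1$, only finitely many reductions are compatible with a positive limit, so there exists an index $\bar k$ such that $\xi_k=\bar\xi$ for every $k\ge\bar k$, i.e.\ step \ref{line::xi_reduction} is never executed past $\bar k$.

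From this assumption I would extract two structural facts. First, because the enrichment step \ref{line::Denrich} lies inside the same branch as the $\xi$-reduction, the direction set stays constant, $D_k\equiv\bar D$, for all $k\ge\bar k$. Second, each call to \texttt{DS} at such $k$ must terminate in one of two mutually exclusive ways: case (A), it exits early at step \ref{line::new_point} with a point $z_k^d$ satisfying $f(x_k^c,z_k^d)\le f(x_k^c,z_k)-\bar\xi$; or case (B), it traverses the whole loop and returns via the branch $\alpha_{k+1}^{\bar D}\ne\mathbf{1}_{|\bar D|}$.

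Next I would rule out (A) happening infinitely often. The monotonicity enforced in step \ref{line::ref} together with the monotonicity of the continuous line search (which yields $f(x_k^c,z_k)\le f(x_k,z_k)$) imply that $\{f(x_k,z_k)\}$ is non-increasing and that each occurrence of (A) forces a drop of at least $\bar\xi$. Since $\Omega=\Omega_x\times\Omega_z$ is the product of a box in $\mathbb{R}^n$ with a finite lattice and $f(\cdot,z)$ is continuous for every $z\in\Omega_z$, $f$ is bounded below on $\Omega$, so only finitely many (A)-iterations are possible: there exists $k^*\ge\bar k$ after which only case (B) can occur.

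The finishing step, which I expect to be the main obstacle, is to derive a contradiction from persistent case (B). For $k\ge k^*$ the expansion in step \ref{line::alpha_d} is never executed, so no component of $\alpha_k^{\bar D}$ ever grows; meanwhile, in each case-(B) iteration every component is replaced via step \ref{line::halved} by $\max\{1,\lfloor\alpha_k^{v^d}/2\rfloor\}$, which is a positive integer no larger than its predecessor. Thus each component forms a non-increasing sequence of positive integers bounded below by $1$, hence stabilizes, and under repeated halving the only possible stable value is $1$. Consequently, at some iteration $\tilde k\ge k^*$ the post-DS vector satisfies $\alpha_{\tilde k+1}^{\bar D}=\mathbf{1}_{|\bar D|}$, which by the algorithm's logic triggers step \ref{line::xi_reduction} and forces $\xi_{\tilde k+1}<\bar\xi$, contradicting $\xi_k=\bar\xi$ for all $k\ge\bar k$. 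This contradiction yields $\xi_k\to 0$.
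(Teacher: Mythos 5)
Your proof is correct and follows essentially the same contradiction argument as the one the paper imports from \cite[Prop.~4]{giovannelli2022derivative}: a persistent $\bar\xi>0$ freezes $D_k$, forces only finitely many sufficient-decrease successes (since $f$ is bounded below on the compact $\Omega$ and each success costs at least $\bar\xi$), and then the integer stepsizes must collapse to $\mathbf{1}_{|\bar D|}$ under repeated halving, triggering the forbidden reduction at step \ref{line::xi_reduction}. Note only that the paper states this lemma without reproducing a proof, so the comparison is against the cited reference's argument, which yours matches.
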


\begin{lemma}[{\cite[Theorem 1]{giovannelli2022derivative}}]
	\label{lem::conv_dfndfl}
	Let $\{(x_k, z_k)\}$ be the sequence of points generated by \texttt{DFNDFL}. Let $H \subseteq \{0, 1, 2,\ldots\}$ be defined as 
	\begin{equation*}
		H = \{k \in \mathbb{N} \mid \xi_{k + 1} < \xi_k\}
	\end{equation*}
	and let $\{v_k^c\}_{k \in H}$ be a dense subsequence in the unit sphere. Then,
	\begin{enumerate}[(i)]
		\item a limit point of $\{(x_k, z_k)\}_{k \in H}$ exists;
		\item every limit point $(x^\star, z^\star)$ of $\{(x_k, z_k)\}_{k \in H}$ is stationary for problem \eqref{eq:prob}.
	\end{enumerate}
\end{lemma}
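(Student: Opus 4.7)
The plan is to prove (i) first via compactness and the infinitude of $H$, and then to establish the stationarity in (ii) in two parts: one for the integer variables, exploiting the discrete search termination logic, and one for the continuous variables, based on a density argument for the gradient-free test.

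For (i), I would observe that $\Omega$ is bounded ($\Omega_x$ is a box in $\mathbb{R}^n$ and $\Omega_z$ is finite). By Lemma \ref{lem::xi} we have $\xi_k \to 0$; since $\{\xi_k\}$ is nonincreasing and only strictly decreased at indices in $H$, the set $H$ must be infinite. Extracting from $\{(x_k,z_k)\}_{k\in H}$ first a subsequence with $z_k$ constant (finite $\Omega_z$) and then a convergent one in $\Omega_x$ (Bolzano--Weierstrass) yields a limit point $(x^\star,z^\star)\in\Omega$, and by Definition \ref{def::convergence} there exists $K\subseteq H$ along which $z_k = z^\star$ eventually and $x_k \to x^\star$.

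For the discrete half of (ii), I would exploit that $k\in H$ implies $\xi$ was reduced at step \ref{line::xi_reduction}, which requires that every $\alpha^v$ was $1$ and that the failure test $f(x_k^c, z_k + v) > f(x_k^c, z_k) - \xi_k$ held for every $v\in D_k$. Combining the monotone enrichment at step \ref{line::Denrich} with the fact that $\mathcal{D}^z(x_k^c, z_k) = \mathcal{D}^z(x^\star, z^\star)$ for $k\in K$ large (since $z_k = z^\star$ eventually), one shows that for every $d \in \mathcal{D}^z(x^\star, z^\star)$ there is $k\in K$ large enough with $d\in D_k$. Next, I would argue $\|x_k^c - x_k\|\to 0$ along $K$ from the continuous line-search updates; this, the continuity of $f$ in $x$, and $\xi_k\to 0$, allow passing to the limit in the failure test, giving $f(x^\star,z^\star) \le f(x^\star, z^\star + d)$ for all $d\in \mathcal{D}^z(x^\star, z^\star)$.

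For the continuous half, I would first show $\alpha_k^c\to 0$: on successful steps the sufficient decrease $\gamma(\alpha_k^c)^2$ is obtained, and since $f$ is bounded on the compact $\Omega$, the squared steps are summable along successes; on failed steps $\alpha_{k+1}^c = \eta\alpha_k^c$. For $k\in K$, the continuous line search must have failed (otherwise $\xi$ would not have been reduced at this iteration), so for both $\rho = \pm 1$
$$f\bigl(\Pi_{\Omega_x}(x_k + \rho\alpha_k^c v_k^c), z_k\bigr) > f(x_k,z_k) - \gamma(\alpha_k^c)^2.$$
Dividing by $\alpha_k^c$, applying the mean value theorem on $\nabla_x f$, invoking density of $\{v_k^c\}_{k\in K}$ in the unit sphere, and handling the projection (the displacement $(\Pi_{\Omega_x}(x_k+\rho\alpha_k^c v_k^c) - x_k)/\alpha_k^c$ asymptotically aligns with the feasible part of $\rho v$ at $x^\star$), I would extract $\nabla_x f(x^\star, z^\star)^\top v \ge 0$ for every feasible direction $v$ at $x^\star$ with respect to $\Omega_x$, which is precisely the first-order variational inequality in Definition \ref{def::stat}.

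The main obstacle I anticipate is the interaction between the projection operator and the density argument: because $(\Pi_{\Omega_x}(x_k+\rho\alpha_k^c v_k^c)-x_k)/\alpha_k^c$ is not simply $\rho v_k^c$, one has to match the limit directions with the active-bound structure at $x^\star$ rather than work with $v_k^c$ directly. A secondary, more book-keeping, subtlety is ensuring that $D_k$ reaches all of $\mathcal{D}^z(x^\star,z^\star)$ inside the subsequence $K$ and not only along some further thinning; this uses the fact that the enrichment at step \ref{line::Denrich} happens exactly at the iterations where $\xi$ is decreased, i.e., at indices in $H$.
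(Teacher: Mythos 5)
Your decomposition---compactness for (i), then discrete stationarity from the failed tests of the \texttt{DS} phase and continuous stationarity from a vanishing-stepsize/density argument---is the right one: it is how this result is established in the source \cite{giovannelli2022derivative} (the present paper only imports the statement without proof), and it mirrors the structure of Propositions \ref{prop::conv_int}--\ref{prop::conv_cont} for \texttt{G-DFL}. There is, however, one genuine error in your continuous part. You assert that for $k\in K\subseteq H$ the continuous line search ``must have failed (otherwise $\xi$ would not have been reduced at this iteration)''. This is false: $\xi_k$ is reduced at step \ref{line::xi_reduction} of Algorithm \ref{alg::ds}, i.e.\ inside the \emph{discrete} search, and its reduction depends only on all primitive directions in $D_k$ failing their tests with unit stepsizes; it is entirely independent of the outcome of the continuous line search, which is governed by the separate sufficient-decrease quantity $\gamma(\alpha_k^c)^2$. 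An index $k\in H$ may perfectly well be one at which the continuous search succeeds and the expansion at step \ref{line::expansion} is performed, in which case the two-sided failure inequality $f\bigl(\Pi_{\Omega_x}(x_k+\rho\alpha_k^c v_k^c),z_k\bigr)>f(x_k,z_k)-\gamma(\alpha_k^c)^2$ for $\rho=\pm1$, on which your limit passage rests, is simply not available.

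The repair is standard but must be made explicit: when the search succeeds, the expansion terminates precisely because the first rejected trial stepsize $\alpha_{k+1}^c/\delta$ violates the sufficient-decrease condition for the selected sign $\rho$, and since the continuous-search procedure of \cite{giovannelli2022derivative} tests $\rho=+1$ before $\rho=-1$, a success with $\rho=-1$ certifies that the initial trial with $\rho=+1$ already failed at stepsize $\alpha_k^c$. As both $\alpha_k^c$ and $\alpha_{k+1}^c/\delta$ vanish (by your own argument that $\alpha_k^c\to0$), one obtains in every case a failure-type inequality along $+v_k^c$ and along $-v_k^c$ with some vanishing stepsize, after which your mean-value/density/projection argument goes through. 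Two minor points: ``summable'' should read ``the telescoping sum of objective decreases is finite, hence the terms $\gamma(\alpha_{k+1}^c)^2$ contributed by successful iterations vanish''; and in the discrete half you correctly note that the \texttt{DS} test in \texttt{DFNDFL} is performed at $(x_k^c,z_k)$ rather than $(x_k,z_k)$, so that $\|x_k^c-x_k\|\le\alpha_{k+1}^c\to0$ is genuinely needed there---that part of your argument is sound.
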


\begin{remark}
	Since we assume in this paper that $f$ is continuously differentiable over $\Omega_x$, the Clarke–Jahn stationarity result from  \cite[Theorem 1]{giovannelli2022derivative} immediately translates into a standard stationarity result in Lemma \ref{lem::conv_dfndfl}. In fact, a convergent version of \texttt{DFNDFL} taking into account the continuous differentiability of $f$ with respect to $x$ could be devised by using a set of directions which positively spans $\mathbb{R}^n$, e.g., the set $\{\pm e_i,\ i=1,\dots,n\}$.
\end{remark}

\section{The Algorithmic Framework}
\label{sec:algo}

In this section, we describe our proposed approach, called \textit{Grad-DFL} (\texttt{G-DFL}), whose algorithmic scheme is reported in Algorithm \ref{alg:GradDFL}. The main difference w.r.t.\ \texttt{DFNDFL} lies in the optimization step of continuous variables: here, continuous local search steps are carried out by means of a first-order optimizer for bound-constrained nonlinear optimization (e.g., Projected Gradient method \cite{bertsekas1999nonlinear},  Frank-Wolfe algorithm \cite{frank1956algorithm} or \texttt{L-BFGS-B} \cite{byrd95}).

\begin{algorithm}[!h]
	\caption{Grad-DFL (\texttt{G-DFL})}
	\label{alg:GradDFL}
	\textbf{Input:} $(x_0, z_0) \in \Omega$,  $\xi_0 > 0$, $\gamma \in (0, 1)$, $\eta \in (0, 1)$\\
	Choose $D_0 =\{v_1,\ldots,v_{|D_0|}\} \subseteq \mathcal{D}^z(x_0, z_0)$\\
	Let $\alpha_0^{D_0} = [\alpha_0^{v_1},\ldots,\alpha_0^{v_{|D_0|}}] = \boldsymbol{1}_{|D_0|}$ \\
	Let $\alpha_0^c = 1$ \\
	\For{$k = 0, 1,\ldots$}{
		$\hat{z}_k^d, D_{k+1}, \alpha_{k+1}^{D_{k+1}}, \xi_{k+1}$ = \texttt{DS}$\left(f, \Omega, (x_k, z_k), D_k, \alpha_k^{D_k}, \xi_k, \eta\right)$\label{line::gds}\\
		\If{$\hat{z}_k^d\neq z_k$ \label{step:check_z}}{Find $z_k^d\in\Omega_z$ s.t.\ $f(x_k,{z}_k^d)\le f(x_k,\hat{z}_k^d)$}
		\Else{$z_k^d=z_k$ \label{step:else_z}}
		\If{$(x_k, z_k^d)$ is not stationary w.r.t.\ $x$\label{line::check_stat}}{
			Compute a feasible descent direction $v^c_k = \tilde{x}_k^c - x_k$, with $\tilde{x}_k^c \in \Omega_x$\label{line::dd}\\
			$\alpha_{k + 1}^c = \max\limits_{h \in \mathbb{N}}\{\delta^h\alpha_0^c\mid f(x_k + \alpha_{k+1}^cv^c_k, z_k^d) \le f(x_k, z_k^d) + \gamma\alpha_{k+1}^c\nabla_x f(x_k, z_k^d)^\top v^c_k$\} \label{line::alphagrad}\\
		}
		\Else{
			$v_k^c = \mathbf{0}_n$\\
			$\alpha_{k+1}^c = 0$
		}
		$x_k^c = x_k + \alpha_{k+1}^cv^c_k$\label{line::end_dd}\\
		Find $(x_{k+1}, z_{k+1}) \in \Omega$ s.t. $f(x_{k+1}, z_{k+1}) \le f(x_k^c, z_k^d)$ \label{line::new_xk+1}
	}
\end{algorithm}

At each iteration $k$ of \texttt{G-DFL}, we start moving the discrete variables through the \texttt{DS} phase (step \ref{line::gds}); if this step is successful, we are actually allowed to move to any other configuration of integer variables that further improves the objective value. Then, we perform a gradient-based minimization for the continuous ones (steps \ref{line::check_stat}-\ref{line::end_dd}). First, in step \ref{line::check_stat} we check if the current point $(x_k, z_k^d)$ is stationary w.r.t.\ the continuous variables; if it is not, we can then find a feasible descent direction at $x_k$ (step \ref{line::dd}). Given such direction, we employ an Armijo-type line-search \cite{bertsekas1999nonlinear} in step \ref{line::alphagrad}, so that the stepsize $\alpha_{k + 1}^c$ leads to a new feasible point satisfying the Armijo sufficient descent condition. Like in \texttt{DFNDFL}, step \ref{line::new_xk+1} allows the employment of additional minimization procedures to find the new iterate $(x_{k+1}, z_{k+1})$ from the point $(x_k^c, z_k^d)$.

\begin{remark}
	\label{remark:multiple_steps}
	Note that, in a single iteration of \texttt{G-DFL}, we can actually carry out multiple continuous local search steps without spoiling the convergence results of the algorithm presented in the following Section \ref{sec::convergence}. In fact, these additional steps can be encapsulated in step \ref{line::new_xk+1} of Algorithm \ref{alg:GradDFL}. 
	Similarly, steps \ref{step:check_z}-\ref{step:else_z} allow us to incrementally perform a sequence of \texttt{DS}-type steps, rather than a single one. 
\end{remark}

\begin{remark}
	Unlike \texttt{DFNDFL}, \texttt{G-DFL} performs the two minimization phases in inverted order. This is due to issues in the convergence analysis presented in the following section. From an experimental point of view, this structural change should have no relevant impact on the performance of the approach. We assessed this fact by some computational experiments that we do not report for the sake of brevity.
\end{remark}

\section{Convergence Analysis}
\label{sec::convergence}

In this section, we report the theoretical analysis for the \texttt{G-DFL} approach.
First, we can easily see that Lemmas \ref{lem::well-defined-d} and \ref{lem::xi} also hold for \texttt{G-DFL}. Then, the set of iteration indices
\begin{equation}\label{def:H_GDFL}
	H = \{k\in\mathbb{N}\ |\ \xi_{k+1} < \xi_k\} 
\end{equation}
is infinite.

Moreover, since in step \ref{line::dd} of the approach we employ a feasible descent direction $v_k^c$, we immediately get that standard line searches from the literature to find a stepsize $\alpha_{k+1}^c$ satisfying the Armijo rule are well-defined (see, e.g., \cite[Proposition 20.4]{grippo2023introduction}). Thus, putting this result together with Lemma \ref{lem::well-defined-d}, we have that the entire procedure is well-defined.

In the next proposition, we state the convergence property of \texttt{G-DFL} related to the discrete variables.

\begin{proposition}
	\label{prop::conv_int}
	Let $\{(x_k, z_k)\}$ and $\{\xi_k\}$ be the sequences produced by \texttt{G-DFL}. Let $H \subseteq \{0, 1, 2,\ldots\}$ be defined as in \eqref{def:H_GDFL} and $(x^\star, z^\star) \in \Omega$ be any accumulation point of $\{(x_k, z_k)\}_{k \in H}$. Then $$f(x^\star, z^\star) \le f(x^\star, z), \qquad \forall (x^\star, z) \in \mathcal{B}^z(x^\star, z^\star).$$
\end{proposition}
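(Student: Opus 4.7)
The plan is to mimic the convergence argument of \texttt{DFNDFL} (Lemma \ref{lem::conv_dfndfl}) adapted to the reordering of discrete and continuous moves in \texttt{G-DFL}. I would first read off from the DS subroutine (Algorithm \ref{alg::ds}) the defining property of the indices $k\in H$: the sufficient-decrease parameter is reduced only when every direction of $D_k$ has been probed unsuccessfully and all associated stepsizes have already collapsed to $1$. Inspecting the failure branch then yields, for all $k\in H$ sufficiently large,
\begin{equation}\label{eq:planfail}
f(x_k, z_k + v) > f(x_k, z_k) - \xi_k, \qquad \forall\,v\in D_k \text{ such that } z_k+v\in\Omega_z.
\end{equation}
This is the analogue in our setting of the key inequality underlying the convergence analysis of \texttt{DFNDFL}.

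Next, given an accumulation point $(x^\star,z^\star)$ of $\{(x_k,z_k)\}_{k\in H}$, I would use Definition \ref{def::convergence} to extract a subsequence $K\subseteq H$ along which $z_k=z^\star$ for all $k$ large and $x_k\to x^\star$. For any $d\in\mathcal{D}^z(x^\star,z^\star)$, feasibility $z_k+d=z^\star+d\in\Omega_z$ holds automatically along such $k\in K$. Provided that $d\in D_k$ eventually along $K$ (the key point discussed below), I would instantiate \eqref{eq:planfail} with $v=d$, pass to the limit using continuity of $f(\cdot,z^\star)$ on $\Omega_x$ (which follows from the differentiability assumption) together with $\xi_k\to 0$ from Lemma \ref{lem::xi}, and obtain
$$f(x^\star,z^\star+d)\ge f(x^\star,z^\star).$$
Since $\mathcal{B}^z(x^\star,z^\star)=\{(x^\star,z^\star+d)\in\Omega : d\in\mathcal{D}^z(x^\star,z^\star)\}$, this rephrases exactly as the conclusion.

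The step I expect to be the main obstacle is proving the inclusion $\mathcal{D}^z(x^\star,z^\star)\subseteq D_k$ for $k\in K$ large. The reasoning relies on three facts: (a) $\Omega_z$ is a finite set, so $\mathcal{D}^z(x^\star,z^\star)$ is a finite family of primitive vectors; (b) $\mathcal{D}^z$ depends only on the integer component, hence $\mathcal{D}^z(x_k,z_k)=\mathcal{D}^z(x^\star,z^\star)$ for all $k\in K$ large; and (c) at every index $k\in H$ the enrichment at line \ref{line::Denrich} of Algorithm \ref{alg::ds} is executed and, whenever $D_k$ is a proper subset of $\mathcal{D}^z(x_k,z_k)$, strictly enlarges $D_k$ inside it. The delicate aspect is that at indices between two successive elements of $K$ the discrete iterate $z_k$ may change, so $D_k$ might accumulate vectors that are infeasible at $z^\star$ and in principle block the enrichment guard at later $K$-iterations. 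I would handle this by combining the monotone growth of $D_k$ (which is never pruned) with the finiteness in (a) to show that only finitely many indices of $K$ can witness $D_k\cap\mathcal{D}^z(x^\star,z^\star)\subsetneq\mathcal{D}^z(x^\star,z^\star)$, after which the desired inclusion is in force and the limiting argument above applies.
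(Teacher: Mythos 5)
Your argument follows the paper's proof essentially step for step: extract a subsequence $K\subseteq H$ converging to $(x^\star,z^\star)$, use Definition \ref{def::convergence} to get $z_k=z_k^d=z^\star$ eventually, observe that any $\bar d\in\mathcal{D}^z(x^\star,z^\star)$ is then a feasible primitive direction at $(x_k,z_k)$, read off the failed sufficient-decrease test $f(x_k,z_k+\bar d)>f(x_k,z_k)-\xi_k$ from the definition of $H$, and pass to the limit using continuity of $f(\cdot,z^\star)$ together with Lemma \ref{lem::xi}. The only divergence is that the paper dispatches the membership of $\bar d$ in the probed set with the single phrase ``taking into account the definitions of $K$ and $\bar d$'', whereas you devote a paragraph to the enrichment mechanics of $D_k$; that extra care addresses a point the paper leaves implicit but does not change the route.
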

\begin{proof}
	Let $K \subseteq H$ be an index set such that
	\begin{equation*}
		\lim_{k \rightarrow \infty, k \in K}(x_k, z_k) = (x^\star, z^\star).
	\end{equation*}
	By Definition \ref{def::convergence},  the definition of $H$ and instructions \ref{step:check_z}-\ref{step:else_z} of the algorithm, we thus have that for $k \in K$ sufficiently large
	\begin{equation}
		\label{eq::z*zk}
		z_k = z_k^d = z^\star.
	\end{equation}
	
	Now, let us consider a point $(x^\star, \bar{z}) \in \mathcal{B}^z(x^\star, z^\star)$. By definition of discrete neighborhood, there exists a direction $\bar{d} \in \mathcal{D}^z(x^\star, z^\star)$ such that $\bar{z} = z^\star + \bar{d}$ and $(x^\star, \bar{z}) \in \Omega$.
	Considering this result together with \eqref{eq::z*zk}, we obtain that, for $k \in K$ sufficiently large, $\bar{z} = z^\star + \bar{d} = z_k + \bar{d} \in \Omega_z$. We can then deduce that, by definition of discrete neighborhood, $\bar{d} \in \mathcal{D}^z(x_k, z_k)$.
	Taking into account the definitions of $K$ and $\bar{d}$, we thus have that $f(x_k, z_k + \bar{d}) > f(x_k, z_k) - \xi_k$ for $k \in K$ sufficiently large. Taking the limits for $k \rightarrow \infty$, with $k \in K$, and recalling Lemma \ref{lem::xi}, we get the thesis.
\end{proof}

As for the convergence property related to the continuous variables, we will assume that the employed gradient-based search directions are \textit{gradient-related}, whose definition \cite[Sec.\ 2.2.1]{bertsekas1999nonlinear} is adapted below for the mixed-integer scenario.
\begin{definition}
	Let $\{(x_k, z_k), v_k\}$ be the sequence generated by a feasible direction method where $x_k \in \Omega_x$, $z_k \in \Omega_z$, $v_k \in \mathbb{R}^n$ are such that $\nabla_x f(x_k, z_k)^\top v_k < 0$ and the continuous variables are updated as $x_{k+1} = x_k + \alpha_kv_k$, with $\alpha_k > 0$. We say that the direction sequence $\{v_k\}$ is \textit{gradient-related} to $\{(x_k, z_k)\}$ if, for any subsequence $\{(x_k, z_k)\}_{k \in K}$ that converges to a nonstationary point w.r.t.\ the continuous variables, the corresponding subsequence	$\{v_k\}_{k \in K}$ is bounded and satisfies $$\limsup_{k \rightarrow \infty, k \in K}\nabla_xf(x_k, z_k)^\top v_k < 0.$$
\end{definition}
Note that this assumption, which at first glance could seem strong, is indeed quite reasonable, since many standard approaches for continuous bound-constrained optimization problems, including the Frank-Wolfe \cite[Section 2.2.2]{bertsekas1999nonlinear} and the Projected Gradient \cite[Proposition 2.3.1]{bertsekas1999nonlinear} methods, actually rely on directions of this type. In Appendix \ref{app:L-BFGS-B} we furthermore show that the employment of \texttt{L-BFGS-B} is also theoretically sound.

\begin{proposition}
	\label{prop::conv_cont_all}
	Let $\{(x_k, z_k)\}$, $\{(x_k, z_k^d)\}$ be the sequences of points produced by \texttt{G-DFL}. Let $(x^\star, z^\star) \in \Omega$ be any accumulation point of $\{(x_k, z_k^d)\}$.
	If the sequence $\{v^c_k\}$ is gradient-related to $\{(x_k, z_k^d)\}$, then 
	\begin{equation}
		\label{eq::stat_cont_2}
		\nabla_x f(x^\star, z^\star)^\top(x - x^\star) \ge 0, \quad \forall x \in \Omega_x.
	\end{equation}
\end{proposition}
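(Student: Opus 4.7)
The approach is proof by contradiction. I would assume there exists $\tilde x \in \Omega_x$ with $\nabla_x f(x^\star, z^\star)^\top (\tilde x - x^\star) < 0$, and combine monotonicity of the objective along the whole sequence with a standard Armijo limit argument and the gradient-relatedness hypothesis to reach a contradiction.

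First, I would chain the decreases produced at each iteration: the Discrete Search gives $f(x_k, \hat z_k^d) \le f(x_k, z_k)$ (with equality in the stagnation case), steps \ref{step:check_z}--\ref{step:else_z} ensure $f(x_k, z_k^d) \le f(x_k, \hat z_k^d)$, the Armijo condition of step \ref{line::alphagrad} yields $f(x_k^c, z_k^d) \le f(x_k, z_k^d) + \gamma \alpha_{k+1}^c \nabla_x f(x_k, z_k^d)^\top v_k^c$, and step \ref{line::new_xk+1} gives $f(x_{k+1}, z_{k+1}) \le f(x_k^c, z_k^d)$. Since $\Omega_z$ is finite and $f(\cdot, z)$ is continuous on the compact set $\Omega_x$ for each $z \in \Omega_z$, $f$ is bounded below on $\Omega$, so the monotone sequence $\{f(x_k, z_k)\}$ converges. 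Telescoping the chained inequality then yields
\begin{equation*}
\sum_{k=0}^{\infty} \alpha_{k+1}^c \bigl(-\nabla_x f(x_k, z_k^d)^\top v_k^c\bigr) < +\infty,
\end{equation*}
whose general term is non-negative, so in particular $\alpha_{k+1}^c \nabla_x f(x_k, z_k^d)^\top v_k^c \to 0$.

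Next, let $K$ be a subsequence along which $(x_k, z_k^d) \to (x^\star, z^\star)$. By Definition \ref{def::convergence}, $z_k^d = z^\star$ for $k \in K$ large. The non-stationarity assumption on $(x^\star, z^\star)$ combined with the continuity of any standard projection-based stationarity measure on $\Omega_x$ (stemming from the continuity of $\nabla_x f(\cdot, z^\star)$) guarantees that $(x_k, z_k^d)$ is itself non-stationary w.r.t.\ $x$ for $k \in K$ large, so the ``else'' branch at step \ref{line::check_stat} is inactive and a genuine feasible descent direction $v_k^c$ is produced. The gradient-relatedness hypothesis then yields boundedness of $\{v_k^c\}_{k \in K}$ and $\limsup_{k \in K} \nabla_x f(x_k, z_k^d)^\top v_k^c < 0$, which together with the summability above forces $\alpha_{k+1}^c \to 0$ on $K$.

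Finally, I would invoke the classical Armijo failure argument: since $\alpha_0^c = 1$ and $\alpha_{k+1}^c$ is the largest power $\delta^h \alpha_0^c$ satisfying Armijo, for $k \in K$ sufficiently large the test must have failed at $\alpha_{k+1}^c/\delta$, that is
\begin{equation*}
\frac{f\bigl(x_k + (\alpha_{k+1}^c/\delta)\, v_k^c,\, z_k^d\bigr) - f(x_k, z_k^d)}{\alpha_{k+1}^c/\delta} > \gamma \nabla_x f(x_k, z_k^d)^\top v_k^c.
\end{equation*}
Extracting a further subsequence so that $v_k^c \to \bar v$, applying the mean value theorem to the left-hand side and letting $k \to \infty$ in $K$ yields $\nabla_x f(x^\star, z^\star)^\top \bar v \ge \gamma \nabla_x f(x^\star, z^\star)^\top \bar v$, which contradicts $\gamma \in (0,1)$ together with $\nabla_x f(x^\star, z^\star)^\top \bar v < 0$. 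The main subtlety is that gradient-relatedness is formally defined only along indices where $v_k^c$ is a strict descent direction, while step \ref{line::check_stat} sets $v_k^c = 0$ whenever the current iterate is already stationary w.r.t.\ $x$; the continuity argument in the previous paragraph is what ensures that, under the contradiction hypothesis, this degenerate branch is inactive along the tail of $K$, so the assumption truly applies to the relevant indices.
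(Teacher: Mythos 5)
Your proof is correct and follows essentially the same route as the paper's: monotonicity of $\{f(x_k,z_k)\}$ together with the chained per-iteration decreases gives $\alpha_{k+1}^c \nabla_x f(x_k,z_k^d)^\top v_k^c \to 0$, gradient-relatedness then forces $\alpha_{k+1}^c \to 0$ along $K$, and the Armijo-failure plus mean-value argument on a convergent subsequence of $\{v_k^c\}$ yields the contradiction $(1-\gamma)\nabla_x f(x^\star,z^\star)^\top \bar v \ge 0$. The only (minor, welcome) difference is that you explicitly verify, via continuity of a stationarity measure, that the degenerate branch $v_k^c = \mathbf{0}_n$ is eventually inactive along $K$ under the contradiction hypothesis, a point the paper leaves implicit.
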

\begin{proof}
	Let $K$ be an index set such that
	\begin{equation}
		\label{eq::lim_2}
		\lim_{k \rightarrow \infty, k \in K}(x_k, z_k^d) = (x^\star, z^\star).
	\end{equation}
	By Definition \ref{def::convergence}, we thus have there exists $\bar{k} \in K$ such that, for all $k \in K$ with $k \ge \bar{k}$,
	\begin{equation}
		\label{eq::zkzkdz*_2}
		z_k^d = z^\star.
	\end{equation}

	By the instructions of Algorithm \ref{alg:GradDFL}, we know that the entire sequence $\{f(x_k, z_k)\}$ is monotonically non-increasing and thus, it admits limit $\bar{f}$, which is finite given the compactness of $\Omega$. Hence, $f(x_k, z_k) - f(x_{k+1}, z_{k+1}) \rightarrow 0$. By steps \ref{line::gds}, \ref{line::alphagrad} and \ref{line::new_xk+1}, we get that, for all $k$,
	$$f(x_k, z_k) - f(x_{k+1}, z_{k+1}) \ge f(x_k, z_k^d) - f(x_k^c, z_k^d) \ge -\gamma\alpha_{k + 1}^c\nabla_xf(x_k, z_k^d)^\top v_k^c.$$ Then, taking the limits for $k \rightarrow \infty$, we obtain that
	\begin{equation}
		\label{eq:limit-gr}
		\alpha_{k + 1}^c\nabla_xf(x_k, z_k^d)^\top v_k^c \rightarrow 0.
	\end{equation}
	
	Now, let us suppose by contradiction that $(x^\star, z^\star)$ is such that equation \eqref{eq::stat_cont_2} does not hold, i.e., $(x^\star, z^\star)$ is not stationary w.r.t.\ the continuous variables. In particular, since $\{v_k^c\}$ is gradient-related to $\{(x_k,z_k^d)\}$, this means that
	\begin{equation}\label{eq:contradiction_2}
		\limsup_{k\to\infty,k\in K}\nabla_x f(x_k,z_k^d)^\top v_k^c < 0.
	\end{equation}
	Combining \eqref{eq:limit-gr} and \eqref{eq:contradiction_2}, it immediately follows that 
	\begin{equation}
		\label{eq::limit_alpha_2}
		\lim_{k \rightarrow \infty, k \in K}\alpha_{k + 1}^c = 0.
	\end{equation}

	Thus, we have that, for $k \in K$ with $k \ge \bar{k}$ sufficiently large, \eqref{eq::zkzkdz*_2} holds and $\alpha_{k + 1}^c < \alpha_{k + 1}^c/\delta < 1$. Hence, by definition of $\{v_k^c\}$ (step \ref{line::dd}) and of the Armijo rule, we must have that $$f\left(x_k + \frac{\alpha_{k + 1}^c}{\delta}v_k^c, z^\star\right) - f\left(x_k, z^\star\right) > \gamma\frac{\alpha_{k + 1}^c}{\delta}\nabla_xf\left(x_k, z^\star\right)^\top v_k^c.$$
	By using the Mean Value Theorem, we also get that $$f\left(x_k + \frac{\alpha_{k + 1}^c}{\delta}v_k^c, z^\star\right) - f\left(x_k, z^\star\right) = \frac{\alpha_{k + 1}^c}{\delta}\nabla_xf\left(x_k + t_k\frac{\alpha_{k + 1}^c}{\delta}v_k^c, z^\star\right)^\top v_k^c,$$ with $t_k \in (0, 1)$.
	Thus, putting together the two results, we obtain that 
	\begin{equation}
		\label{eq::before_the_limits_2}
		\nabla_xf\left(x_k + t_k\frac{\alpha_{k + 1}^c}{\delta}v_k^c, z^\star\right)^\top v_k^c > \gamma\nabla_xf\left(x_k, z^\star\right)^\top v_k^c.
	\end{equation}
	Since $\{v_k^c\}$ is gradient-related, $\{v_k^c\}_{k \in K}$ is bounded and, then, there exists a subsequence $\bar{K} \subseteq K$ such that $v_k^c \rightarrow \bar{v}^c$ for $k \rightarrow \infty$ with $k \in \bar{K}$. Thus, taking the limits in \eqref{eq::before_the_limits_2} for $k \rightarrow \infty$ with $k \in \bar{K}$, recalling the continuity of $\nabla_x f$, \eqref{eq::lim_2} and \eqref{eq::limit_alpha_2}, we obtain that $$(1 - \gamma)\nabla_xf\left(x^\star, z^\star\right)^\top\bar{v}^c \ge 0.$$ Since $1-\gamma > 0$ and $\{v_k^c\}$ is gradient-related, we have a contradiction with \eqref{eq:contradiction_2}. Thus, the thesis follows.
\end{proof}

\begin{proposition}
	\label{prop::conv_cont}
	Let $\{(x_k, z_k)\}$, $\{(x_k, z_k^d)\}$ be the sequences of points produced by \texttt{G-DFL}. Let $H \subseteq \{0,1,2,\ldots\}$ be defined as in \eqref{def:H_GDFL} and let $(x^\star, z^\star) \in \Omega$ be any accumulation point of $\{(x_k, z_k)\}_{k \in H}$. If the sequence $\{v^c_k\}$ is gradient-related to $\{(x_k,z_k^d)\}$, then 
	\begin{equation*}
		\nabla_x f(x^\star, z^\star)^\top(x - x^\star) \ge 0, \quad \forall x \in \Omega_x.
	\end{equation*}
\end{proposition}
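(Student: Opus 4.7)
The plan is to reduce this statement to Proposition \ref{prop::conv_cont_all} by identifying $\{(x_k,z_k)\}_{k\in H}$ as a subsequence of $\{(x_k,z_k^d)\}$. The essential observation is that, by inspection of Algorithm \ref{alg::ds}, the parameter $\xi_k$ is decreased only along the final \emph{else} branch of \texttt{DS}, which is entered precisely when every direction in $D_k$ has failed to yield sufficient decrease with stepsize $1$; in that branch the procedure returns the input point $(x,z)$ unchanged. Hence for every $k\in H$ one has $\hat{z}_k^d = z_k$, and therefore, by the \emph{else} clause at step \ref{step:else_z} of \texttt{G-DFL}, also $z_k^d = z_k$.

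With this identification in hand, I would proceed as follows. Let $K\subseteq H$ be an index set along which $(x_k,z_k)\to(x^\star,z^\star)$. Since $z_k^d=z_k$ for all $k\in H$, the same subsequence satisfies $(x_k,z_k^d)\to(x^\star,z^\star)$, so $(x^\star,z^\star)$ is an accumulation point of $\{(x_k,z_k^d)\}$. Because the hypothesis of the proposition provides exactly the gradient-relatedness of $\{v_k^c\}$ to $\{(x_k,z_k^d)\}$ that Proposition \ref{prop::conv_cont_all} requires, applying that proposition directly yields
\[
\nabla_x f(x^\star,z^\star)^\top(x-x^\star)\ge 0\qquad\text{for all }x\in\Omega_x,
\]
which is the thesis.

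The only non-routine step is the bookkeeping argument that $k\in H$ implies $\hat{z}_k^d=z_k$; I would make this explicit by walking through the control flow of \texttt{DS}, noting that the two \emph{early return} branches (a successful line search, or a failure across all directions that still modifies some $\alpha_{k+1}^{v^d}$) both leave $\xi$ untouched, so membership in $H$ forces termination along the remaining branch, which returns the input $z$ and sets $\xi_{k+1}=\eta\xi_k$. Once this is said, the rest is essentially a one-line reduction to the already-proved Proposition \ref{prop::conv_cont_all}; no additional analytical work on the line search or the gradient-related condition is needed.
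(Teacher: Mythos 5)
Your proposal is correct and follows essentially the same route as the paper's proof: establish that $z_k^d = z_k$ for all $k \in H$, so that the accumulation point of $\{(x_k,z_k)\}_{k\in H}$ is also an accumulation point of $\{(x_k,z_k^d)\}$, and then invoke Proposition~\ref{prop::conv_cont_all}. The only difference is that you spell out the control-flow argument in \texttt{DS} justifying the identity $(x_k,z_k)=(x_k,z_k^d)$ for $k\in H$, which the paper simply asserts as following from the definition of $H$.
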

\begin{proof}
	First of all we note that, by definition \eqref{def:H_GDFL} of $H$, it results  
	\[
	(x_k, z_k) = (x_k,z_k^d),\qquad\text{for all}\ {k \in H}. 
	\]
	Now, let $K \subseteq H$ be an index set such that
	\begin{equation*}
		\lim_{k \rightarrow \infty, k \in K}(x_k, z_k) = \lim_{k \rightarrow \infty, k \in K}(x_k, z_k^d) = (x^\star, z^\star).
	\end{equation*}
	The thesis then directly follows from Proposition \ref{prop::conv_cont_all}.
\end{proof}

We conclude the section with the main convergence result of the proposed algorithm.

\begin{proposition}
	Let $\{(x_k, z_k)\}$, $\{(x_k, z_k^d)\}$ be the sequences of points generated by \texttt{G-DFL}. Let $H \subseteq \{0, 1, 2,\ldots\}$ be defined as in \eqref{def:H_GDFL} and let the sequence $\{v_k^c\}$ be gradient-related to $\{(x_k,z_k^d)\}$. Then,
	\begin{enumerate}[(i)]
		\item a limit point of $\{(x_k, z_k)\}_{k \in H}$ exists;
		\item every limit point $(x^\star, z^\star)$ of $\{(x_k, z_k)\}_{k \in H}$ is stationary for problem \eqref{eq:prob}.
	\end{enumerate}
\end{proposition}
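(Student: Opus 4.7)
The plan is to view this final proposition as a clean corollary of the two preceding propositions, with the only substantive additional content being the existence statement (i). The two Definitions of stationarity and convergence, together with Propositions \ref{prop::conv_int} and \ref{prop::conv_cont}, essentially do all the work, so the proof should be short.

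For (i), I would start by invoking Lemma \ref{lem::xi}, which yields $\xi_k \to 0$: since $\{\xi_k\}$ is non-increasing and tends to zero, the set $H = \{k \mid \xi_{k+1} < \xi_k\}$ must be infinite. Next I would use the compactness of $\Omega = \Omega_x \times \Omega_z$. Crucially, $\Omega_z$ is a \emph{finite} subset of $\mathbb{Z}^m$, so the integer sequence $\{z_k\}_{k \in H}$ takes values in a finite set and hence must assume some value $z^\star \in \Omega_z$ on an infinite subset $K \subseteq H$. On this subset, $\{x_k\}_{k \in K}$ lives in the compact box $\Omega_x$, so by Bolzano--Weierstrass it admits a subsequence converging to some $x^\star \in \Omega_x$. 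This subsequence satisfies Definition \ref{def::convergence}: the integer part is eventually constant and equal to $z^\star$, and the continuous part converges to $x^\star$, so $(x^\star, z^\star)$ is a limit point of $\{(x_k, z_k)\}_{k \in H}$ in the sense required by the paper.

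For (ii), I would simply let $(x^\star, z^\star)$ be an arbitrary limit point of $\{(x_k, z_k)\}_{k \in H}$. Applying Proposition \ref{prop::conv_int} yields the discrete optimality condition
\[
f(x^\star, z^\star) \le f(x^\star, z) \quad \text{for all } (x^\star, z) \in \mathcal{B}^z(x^\star, z^\star).
\]
Then, since $\{v_k^c\}$ is assumed gradient-related to $\{(x_k, z_k^d)\}$, Proposition \ref{prop::conv_cont} immediately yields the continuous stationarity condition
\[
\nabla_x f(x^\star, z^\star)^\top (x - x^\star) \ge 0 \quad \text{for all } x \in \Omega_x.
\]
These two conditions together are precisely Definition \ref{def::stat}, so $(x^\star, z^\star)$ is stationary for problem \eqref{eq:prob}.

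The main potential obstacle is really in part (i), and it is mild: one needs to be careful that the notion of limit point used is that of Definition \ref{def::convergence}, which demands the integer part to be eventually constant, not merely convergent in $\mathbb{Z}^m$. The finiteness of $\Omega_z$ resolves this trivially via a pigeonhole argument, so no additional machinery is required. Part (ii) is essentially a one-line combination of the previous two propositions, with no genuine difficulty beyond invoking the hypothesis that $\{v_k^c\}$ is gradient-related.
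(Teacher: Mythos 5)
Your proposal is correct and follows essentially the same route as the paper: point (i) from compactness of $\Omega$ and point (ii) as a direct combination of Propositions \ref{prop::conv_int} and \ref{prop::conv_cont}. Your treatment of (i) is merely more explicit than the paper's one-line appeal to compactness, since you spell out via the finiteness of $\Omega_z$ and Bolzano--Weierstrass why a limit point exists in the sense of Definition \ref{def::convergence} (eventually constant integer part), which is a welcome clarification but not a different argument.
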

\begin{proof}
	As for point (i), since, by instructions of the algorithm, $(x_k, z_k) \in \Omega$ for all $k \in H$, with $\Omega$ being a compact set, the sequence $\{(x_k, z_k)\}_{k \in H}$ admits limit points. Then, point (ii) straightforwardly follows from Propositions \ref{prop::conv_int} and \ref{prop::conv_cont}.
\end{proof}

\section{Numerical Experiments}
\label{sec::exp}

In this section, we report the results of thorough computational experiments, by which we assess the potential of \texttt{G-DFL}. The code\footnote{The implementation code of the \texttt{G-DFL} algorithm can be found at \href{https://github.com/pierlumanzu/g_dfl}{github.com/pierlumanzu/g\_dfl}.} for the experiments was written in \texttt{Python3}. The experiments were run on a computer with the following characteristics: Ubuntu 22.04, Intel Xeon Processor E5-2430 v2 6 cores 2.50 GHz, 32 GB RAM.

Concerning \texttt{G-DFL}, we tested three different optimizers for the update of continuous variables: the limited memory BFGS Quasi-Newton method (\texttt{L-BFGS-B}) \cite{byrd95}, the Projected Gradient method (\texttt{PG}) \cite{bertsekas1999nonlinear} and Frank-Wolfe algorithm (\texttt{FW}) \cite{frank1956algorithm}. As underlined in Section \ref{sec::convergence} and Appendix \ref{app:L-BFGS-B}, the employment of all the three procedures within \texttt{G-DFL} is theoretically sound.

As for the parameters setting, for the Armijo-type line search used in \texttt{PG} and \texttt{FW}, the values were chosen based on some preliminary experiments, which we do not report here for the sake of brevity. We finally set $\gamma = 10^{-4}$ and $\delta = 0.5$. 

The local solvers have been used in two different ways: we tried both to carry out a single descent step or to perform $p$ consecutive updates proportional to the number $N=n+m$ of variables. The variants of \texttt{G-DFL} where multiple steps of local search are carried out at each iteration are denoted by a symbol ``+'' in the name. Moreover, based again on some preliminary experiments, we set $p = N/10$. Obviously, the local search is anyway stopped earlier if an approximately stationary point is reached; the stationarity approximation tolerance is set to $\varepsilon = 10^{-7}$.

We considered for comparison the \texttt{DFNDFL} algorithm \cite{giovannelli2022derivative}, to show that gradient-based steps can indeed be beneficial within the framework. We also considered  a popular branch-and-bound based approach for bound-constrained mixed-integer nonlinear optimization, namely \texttt{B-BB}, available in the open source library \texttt{Bonmin} \cite{BONAMI2008186}. Finally, we included in the comparisons the \texttt{MISQP} solver proposed in \cite{Exler2007}, which is the direct comparison as a solver that exploits derivatives of continuous variables without requiring the relaxation of integer ones; since here we only deal with bound constraints, the method simplifies to a Quasi-Newton type algorithm for mixed-integer optimization. 

For all solvers, we set the parameters as suggested in the corresponding reference papers and the available codes. For \texttt{DFNDFL},  Sobol \cite{SOBOL1976236} and Halton \cite{Halton1960} sequences are used to generate dense continuous  and primitive discrete directions, respectively. Note that, since the same discrete local search is employed both in \texttt{DFNDFL} and in \texttt{G-DFL}, the parameter settings related to this phase are the same in both algorithms. \texttt{B-BB} was run in an \texttt{AMPL} environment, which is in turn managed through the \texttt{amplpy} Python interface. The \texttt{MISQP} approach was implemented in Python, solving the quadratic subproblems by means of Gurobi \cite{gurobi}.

In addition to the standard setup for \texttt{DFNDFL}, we also tested a variant (referred to as \texttt{DFNDFL-C}) that only considers coordinate directions for the update of continuous variables; since we have assumed $f$ to be differentiable w.r.t.\ continuous variables, density with respect to the continuous variables is indeed no more strictly necessary to have convergence guarantees. 

Moreover, while standard \texttt{B-BB} provides us with baseline results from an exact solver working in analytical settings, we also had interest in measuring its performance in a scenario where variables $z$ are absolutely unrelaxable: in this case, their value within the branch-and-bound process shall be rounded up to integers before evaluating $f$; we refer to this setup by \texttt{B-BB-T}. 

Since all the approaches except for \texttt{MISQP} include some random operations, they were run 5 times with 5 different seeds for the pseudo-random number generator. The results for the analysis were then obtained by calculating the averages of the metrics values over the 5 runs. Each solver is given a budget of 2 minutes of computation before stopping; the choice of this stopping condition is due to the proper termination criteria of the algorithms being structurally too different. Yet, early termination was enabled for \texttt{G-DFL} and \texttt{DFNDFL} when hitting a maximum number of tested discrete directions, set equal to 300. Of course, all natural stopping conditions employed in \texttt{B-BB}, indicating that the solution cannot provably be improved anymore, have also been considered.

We ran the approaches on a collection of 19 problems, each of which can be tested with different numbers of variables $N$ and, thus, with various choices of $n$ and $m$ (Table \ref{tab::pc}); the total number of considered instances was thus 304. The problems are the following: \texttt{rastrigin, ackley}, \texttt{dixon-price} (also considered in \cite{giovannelli2022derivative} with the names \texttt{problem115, problem206, problem208}), \texttt{expquad, mccormck, qudlin, probpenl, sineali, nonscomp, explin}, \allowbreak \texttt{explin2, biggsb1, bdexp, cvxbqp1, ncvxbqp1, ncvxbqp2, ncvxbqp3, \allowbreak chenhark} and \texttt{pentdi}, selected from the \texttt{Cutest}\footnote{\href{https://github.com/ampl/global-optimization/tree/master/cute}{github.com/ampl/global-optimization/tree/master/cute}} collection \cite{Gould2015}. In most of the experiments, we configured the problems so as to have the ratio $m/N = 1/50$. We will specify in the following when a different ratio will be considered. 
Note that all the tested instances are originally bound constrained continuous problems; in order to transform each one into a mixed-integer problem, we added the integrity constraint for the last $m$ variables.

\begin{table}[h]
	\centering
	\footnotesize
	\renewcommand{\arraystretch}{1.3}
	\begin{tabular}{|c||c|}
		\hline
		$\mathbf{N}$&$\mathbf{m}$\\
		\hline
		\hline
		100&\textbf{2}, 5, 7, 10, 20, 40\\
		\hline
		200&$\mathbf{4}$\\
		\hline
		500&$\mathbf{10}$\\
		\hline
		1000&2, 5, 10, \textbf{20}, 50, 100\\
		\hline
		2000&\textbf{40}\\
		\hline
		5000&\textbf{100}\\
		\hline
	\end{tabular}
	\caption{Problems configurations. The values for $m$ in bold correspond to the 2\% of the total number of variables $N$ for the corresponding problem.}
	\label{tab::pc}
\end{table}

Algorithms have been compared based on the following metrics: the objective value of the returned solution $f^\star$, the runtime $T$, the number of iterations $N_{it}$ and the number of function (and derivatives) evaluations $N_{f}$. We also took into account the values of the latter three metrics attained at the moment the best solution is found; we denote these values by $T^{best}$, $N_{it}^{best}$ and $N_f^{best}$ respectively. To provide compact views of the results, we systematically make use of performance profiles \cite{Dolan2002}.
Unless stated otherwise, performance profiles of $T$, $N_{it}$, and $N_f$ have been computed considering only the problem instances where the involved solvers reached the same solution. 

To summarize results in terms of $f^\star$, we instead show the cumulative distribution of relative gap to the optimal value $\frac{|f-f^*|}{|f^*|}$, i.e., we plot the cumulative distribution function of the relative gap between the score achieved by a solver and the best score among those obtained by all the tested solvers. This kind of plot is more suitable than standard performance profiles when the considered metric can take both positive and negative values and does not indicate an absolute cost. 

\subsection{Preliminary Analysis of Grad-DFL}
Before comparing \texttt{G-DFL} with other approaches, an analysis is needed of the local optimizers it can be equipped with. For this preliminary stage, we considered a subset of the problems listed in Section \ref{sec::exp}: \texttt{rastrigin, ackley, dixon-price, expquad, mccormck, explin} and \texttt{ncvxbqp1}. In Figure \ref{fig::lbfgs}, we show the cumulative distributions of the relative gap to the best function value and the performance profiles w.r.t.\ $T$ for \texttt{G-DFL} equipped with    \texttt{PG}, \texttt{PG+}, \texttt{FW}, \texttt{FW+}, \texttt{L-BFGS-B} and \texttt{L-BFGS-B+}.

\begin{figure}[h]
	\centering
	\subfloat[Cumulative distribution of $\frac{f^\star-f_\text{best}}{|f_\text{best}|}$]{\includegraphics[width=0.45\textwidth]{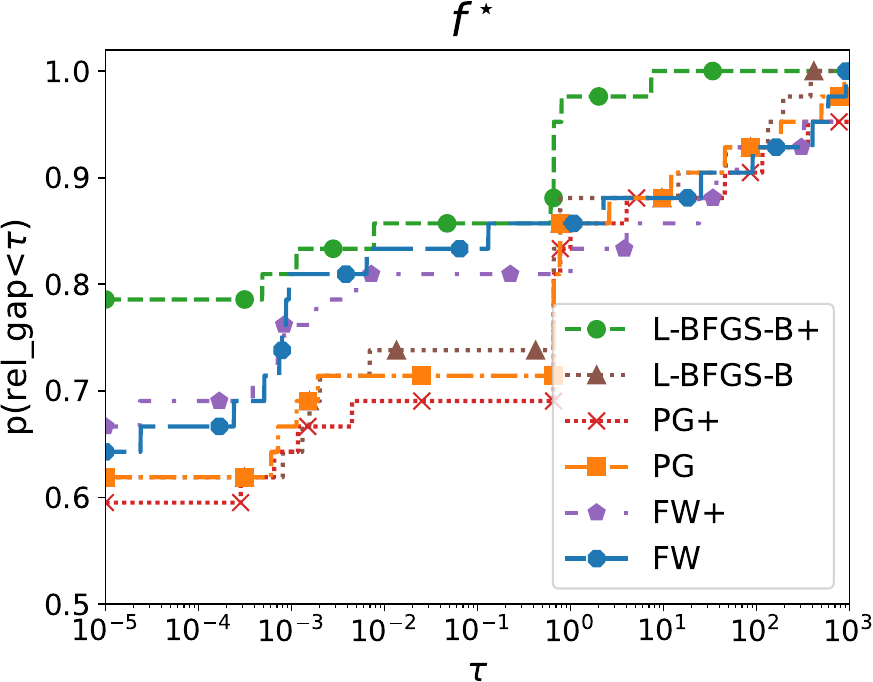}}
	\hfil
	\subfloat[Performance profiles w.r.t.\ $T$ on the 19 problems where all the solvers reached the same solution.]{\includegraphics[width=0.45\textwidth]{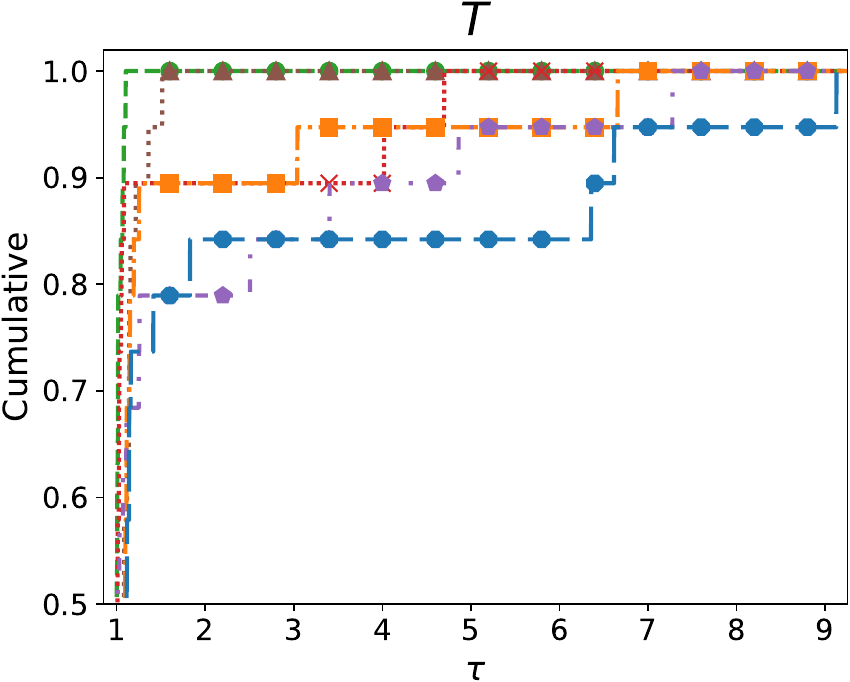}}
	\caption{Performance comparison for \texttt{G-DFL} equipped with different continuous local searches.
	}
	\label{fig::lbfgs}
\end{figure}

We observe that the employment of \texttt{L-BFGS-B+} led to far superior performance than any other variant in terms of $f^\star$, and it also appears to be the best option in terms of efficiency.  It is interesting to note that the performance of \texttt{PG} and \texttt{L-BFGS-B}, in terms of objective value, is similar; this shall not be surprising taking into account the discussion in Appendix \ref{app:L-BFGS-B}.

It is also worth noting that the execution of multiple continuous steps at each iteration is apparently crucial to boost the performance of \texttt{L-BFGS-B}, whereas the differences between \texttt{PG}/\texttt{PG+} and \texttt{FW}/\texttt{FW+} are less marked. This is arguably due to the fact that several iterations are needed for the limited-memory Quasi-Newton matrix to provide significant second order information.

The results of these preliminary experiments clearly lead us to use \texttt{L-BFGS-B+} as continuous local search in \texttt{G-DFL} for the rest of the work.

\subsection{Efficiency Evaluation}
In this section, we measure the efficiency of \texttt{G-DFL} upon the entire considered benchmark of problems, comparing it with \texttt{DFNDFL} and \texttt{B-BB}. 

Since in the following we will be comparing derivative-free and gradient-based approaches, it is crucial for getting intelligible results to know the ratio between the cost of computing $f$ and $\nabla_x f$. We noticed in fact that the usual assumption $\text{cost}(\nabla_x f)\approx n\times \text{cost}(f)$ represents a very rough approximation. 
Indeed, a more careful analysis, reported in Figure \ref{fig::boxplot} and Table \ref{tab::boxplot}, hints that the cost of evaluating gradients is often over-estimated. Here, we considered a subset of problems (\texttt{rastrigin, biggsb1, bdexp, mccormck, ncvxbqp2}) and for different values of $n$ we evaluated the computational time for calculating the gradient ($T_g$) and the objective function ($T_f$) at 500 randomly drawn feasible solutions.

\vspace{0.5cm}

\hskip-0.6cm
\begin{minipage}{\textwidth}
	\begin{minipage}{0.49\textwidth}
		\centering
		\includegraphics[width=0.7\textwidth]{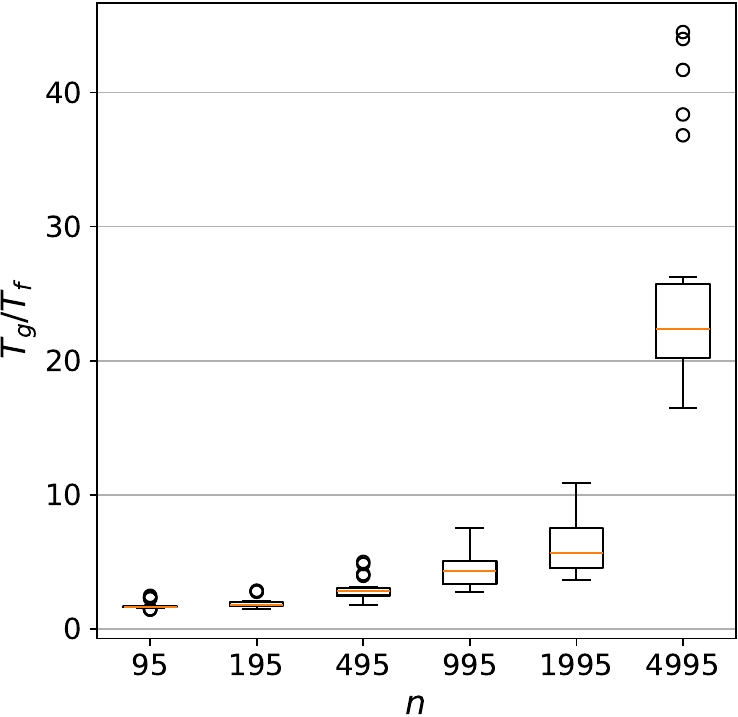}
		\captionof{figure}{Box plot of the ratio $T_g/T_f$; for each considered value of $n$, function values and gradients of 5 problems are computed at 500 feasible solutions.}
		\label{fig::boxplot}
	\end{minipage}
	\hfill
	\begin{minipage}{0.49\textwidth}
		\centering
		\scriptsize
		\renewcommand{\arraystretch}{1.3}
		\begin{tabular}{|c||c|c|}%
			\hline%
			\multirow{2}{*}{$\mathbf{n}$}&\multicolumn{2}{c|}{\textbf{$\mathbf{T_g}$\textbf{/}$\mathbf{T_f}$}}\\%
			\cline{2%
				-%
				3}%
			&\textit{Min}&\textit{Max}\\%
			\hline%
			\hline%
			95&1.444&2.47\\%
			\hline%
			195&1.492&2.852\\%
			\hline%
			495&1.781&4.999\\%
			\hline%
			995&2.771&7.543\\%
			\hline%
			1995&3.656&10.908\\%
			\hline%
			4995&16.441&44.513\\%
			\hline%
		\end{tabular}%
		\captionof{table}{Minimum and maximum values of the ratio $T_g/T_f$ for different problem sizes. For each considered value of $n$, function values and gradients of 5 problems are computed at 500 feasible solutions.}
		\label{tab::boxplot}
	\end{minipage}
\end{minipage}

\vspace{0.5cm}

In order to make the comparison in terms of $N_f$ and $N_f^{best}$ useful, we decided to weigh the gradient evaluations by a factor equal to the maximum cost ratio attained at the corresponding problem size (Table \ref{tab::boxplot}).

\begin{figure}[!h]
	\centering
	\subfloat{\includegraphics[width=0.32\textwidth]{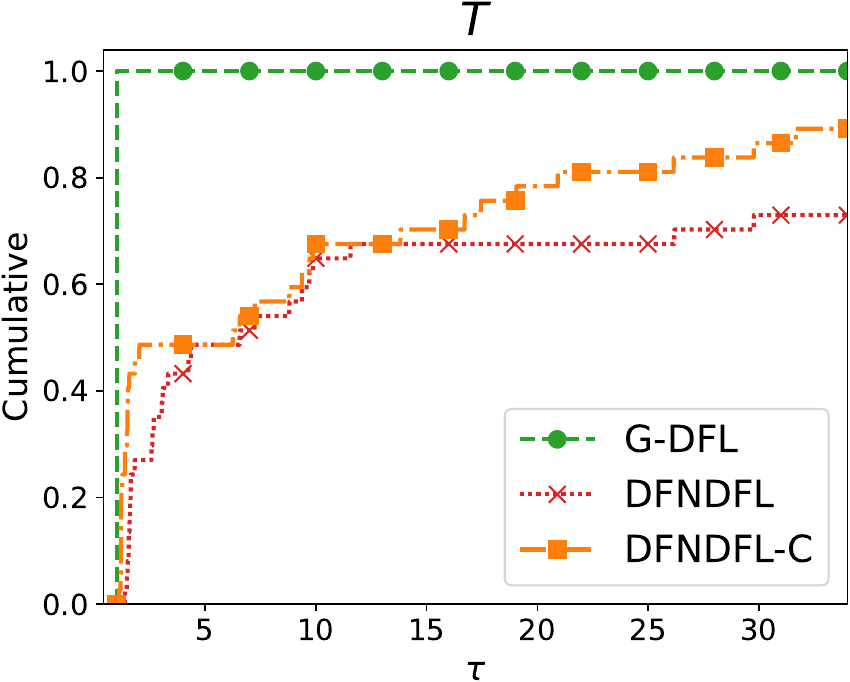}}
	\hfil
	\subfloat{\includegraphics[width=0.32\textwidth]{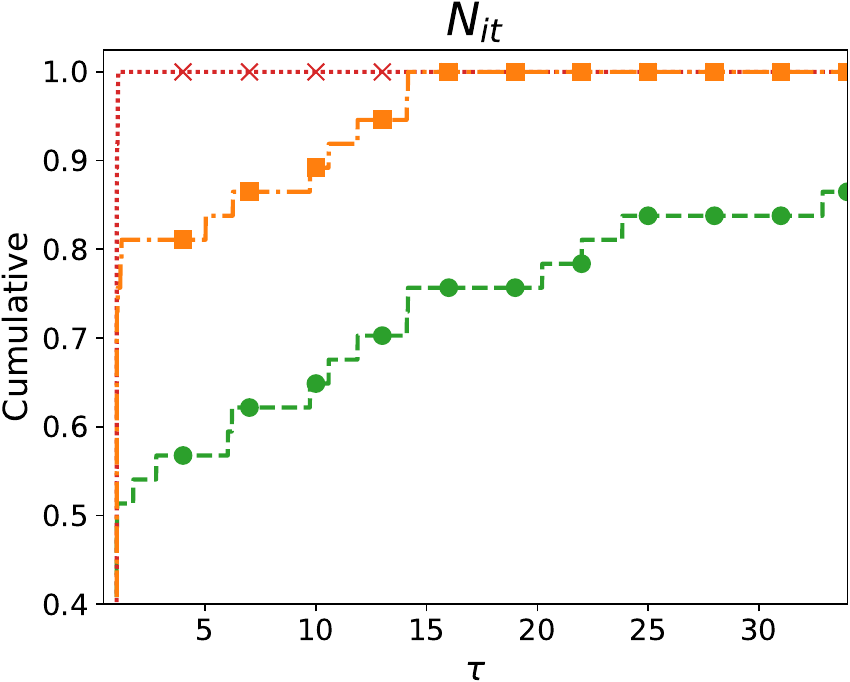}}
	\hfil
	\subfloat{\includegraphics[width=0.32\textwidth]{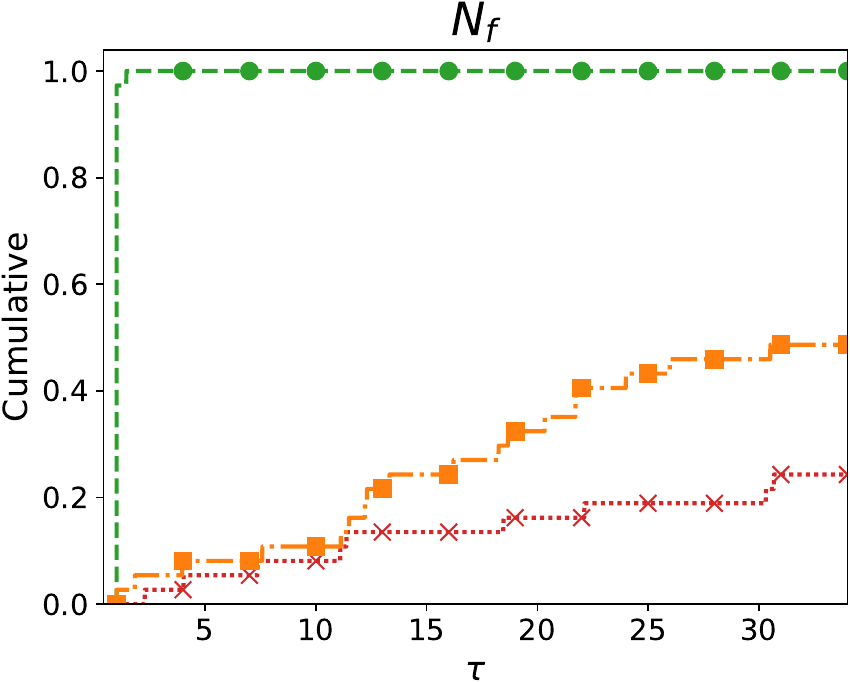}}
	\\
	\subfloat{\includegraphics[width=0.32\textwidth]{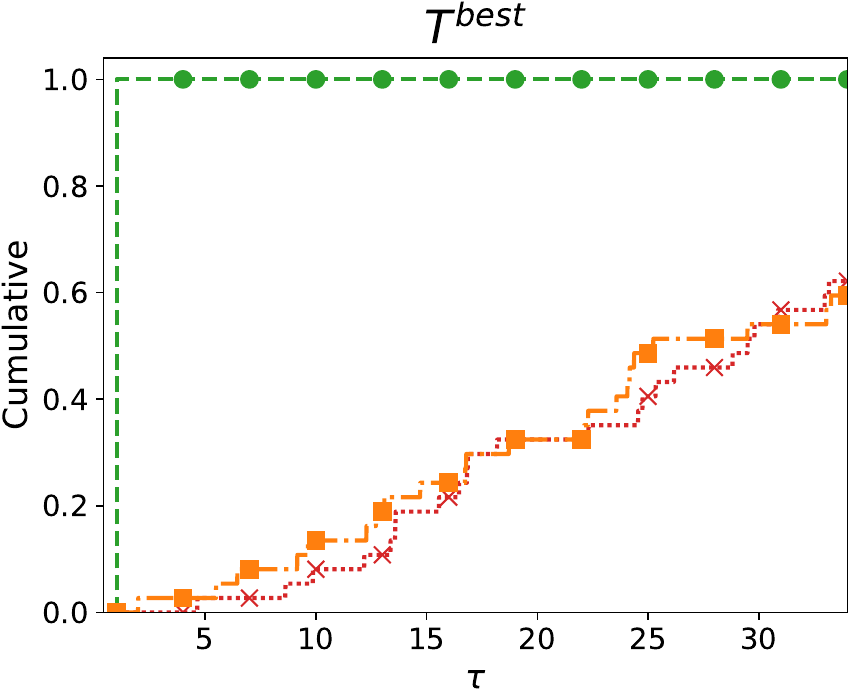}}
	\hfil
	\subfloat{\includegraphics[width=0.32\textwidth]{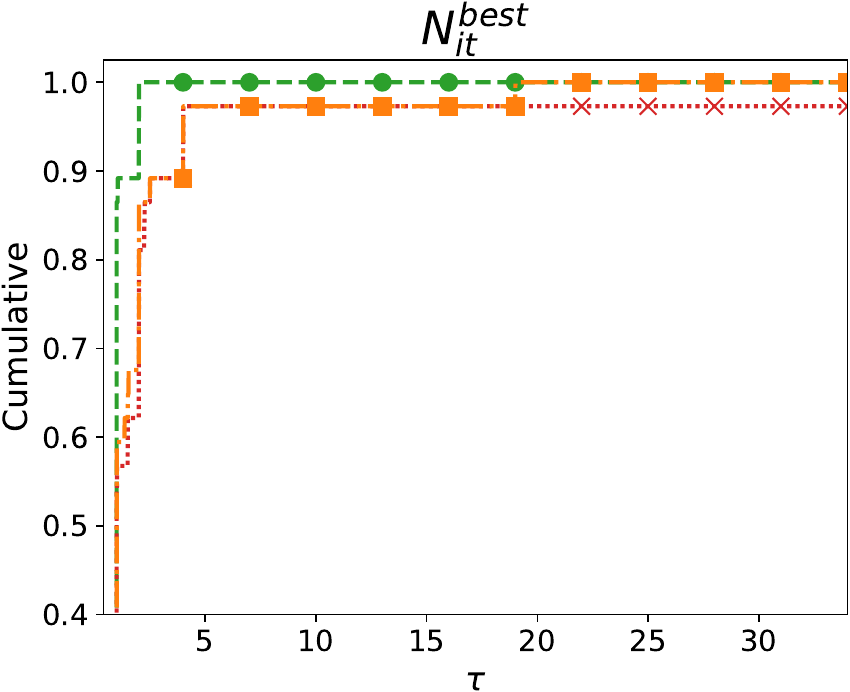}}
	\hfil
	\subfloat{\includegraphics[width=0.32\textwidth]{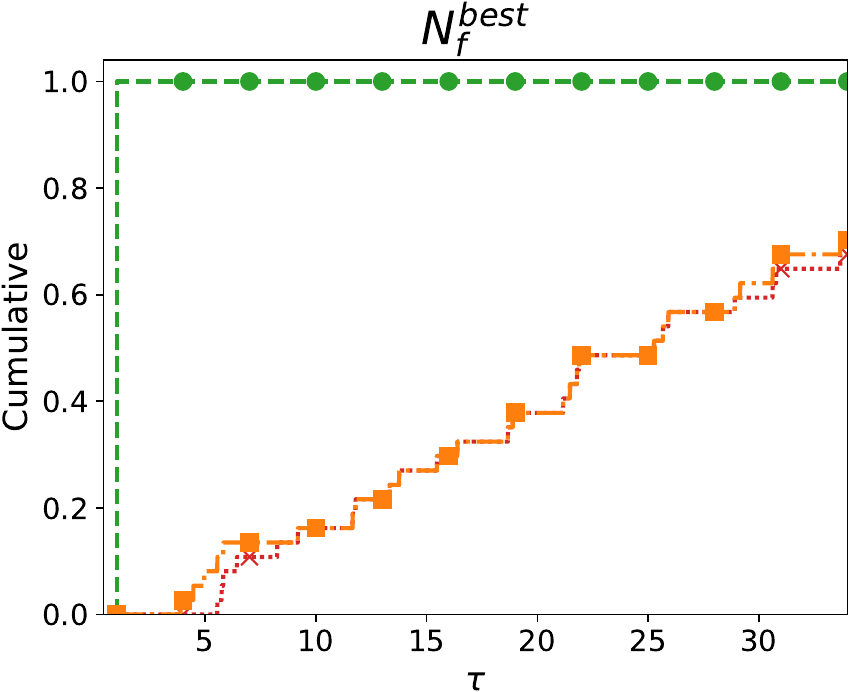}}
	\caption{Performance profiles for \texttt{G-DFL},
		\texttt{DFNDFL} and \texttt{DFNDFL-C}. The plots are made considering the 37 problems where the three solvers reached the same solution.}
	\label{fig::pp}
\end{figure}

In Figure \ref{fig::pp}, we report the performance profiles for \texttt{G-DFL}, \texttt{DFNDFL} and \texttt{DFNDFL-C}. We  observe that, maybe unsurprisingly, the results in terms of computation time are highly correlated to those about function evaluations. Indeed, \texttt{G-DFL} and \texttt{DFNDFL} share the overall structure and most basic operations, so that the difference lies in the end in the effort spent at evaluating functions.
\texttt{G-DFL} clearly outperformed its derivative-free counterpart. We might also note that, while \texttt{DFNDFL-C} performed slightly better than \texttt{DFNDFL} in terms of $T$ and $N_f$, the difference becomes negligible when we consider $T^{best}$ and $N^{best}_f$.

The performance profiles w.r.t.\ $N_{it}$ and $N_{it}^{best}$ highlight other computational aspects of the considered procedures. We see that \texttt{G-DFL} consistently carries out a larger number of iterations $N_{it}$; yet, being it also the fastest method, we deduce that \texttt{G-DFL} iterations are less computationally demanding than those of \texttt{DFNDFL}. Moreover, the greater computational cost required by an iteration of \texttt{DFNDFL} does not lead to the best solution with fewer iterations, as the plot w.r.t. $N_{it}^{best}$ attests.

Moving on to Figure \ref{fig::dc}, we plot the median runtime for \texttt{G-DFL}, \texttt{DFNDFL} and \texttt{DFNDFL-C} as the percentage of discrete variables $m$ grows on all problems with 100 total variables. 

\begin{figure}[!h]
	\centering
	\includegraphics[width=0.5\textwidth]{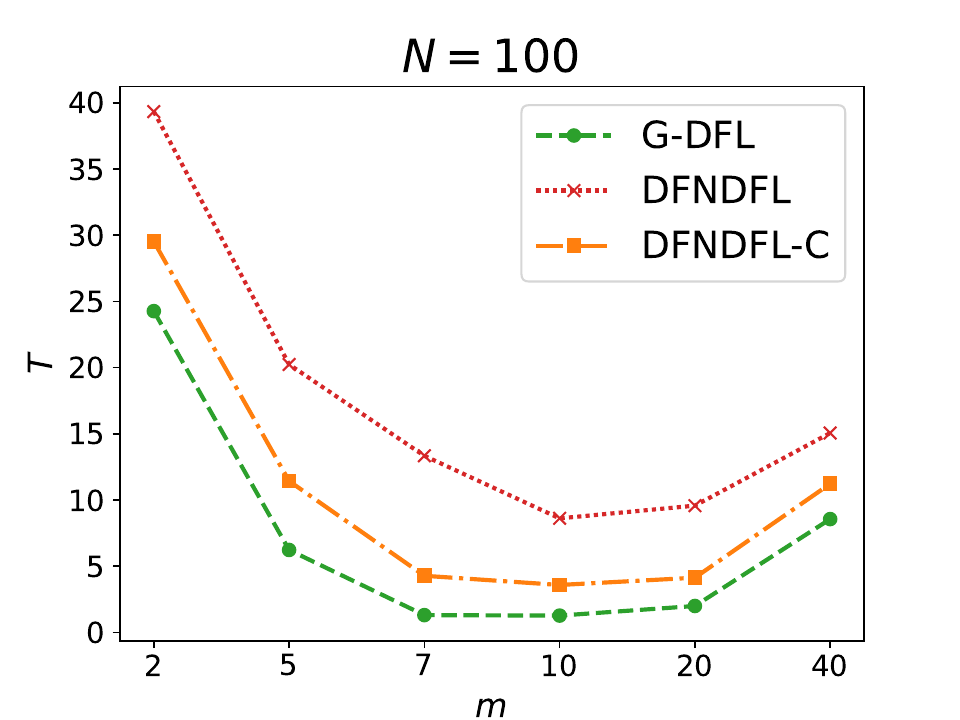}
	\caption{Plot of the median runtime for \texttt{G-DFL}, \texttt{DFNDFL} and \texttt{DFNDFL-C} for different values of $m$ on the 114 problems with $N=100$.}
	\label{fig::dc}
\end{figure}

\texttt{DFNDFL} and \texttt{DFNDFL-C} generally required higher runtimes than \texttt{G-DFL}. 
In the case of small $m$, we found the processing cost to be dominated by the quite high cost 
of computation of the 300 primitive discrete directions.
On the other hand, when $m$ is large, defining 300 primitive directions is trivial, but testing each one of them is computationally expensive. Problems with $m\sim10$ seem to be the least expensive to solve with DFL-type methods.

For the last part of this section, we included in the comparison the \texttt{B-BB} and the \texttt{MISQP} algorithms. 
In Figure \ref{fig::tfb_a}, we show the performance profiles w.r.t.\ $T^{best}$ for \texttt{G-DFL}, \texttt{DFNDFL}, \texttt{B-BB} and \texttt{MISQP}. The performance difference between \texttt{G-DFL} and \texttt{B-BB} appears small, whereas \texttt{DFNDFL} and \texttt{MISQP} seem to struggle. We have to point out that Figure \ref{fig::tfb_a} is based on the 19 problems where the three solvers reach the same solution. If, instead, all the benchmark problems are considered and an infinite value to the $T^{best}$ metric is set when a solver does not reach the optimum (Figure \ref{fig::tfb_b}), \texttt{G-DFL} becomes by far the best method.

\begin{figure}[!h]
	\centering
	\subfloat[Performance profiles on the 19 problems where the three solvers reach the same solution.\label{fig::tfb_a}]{\includegraphics[width=0.4\textwidth]{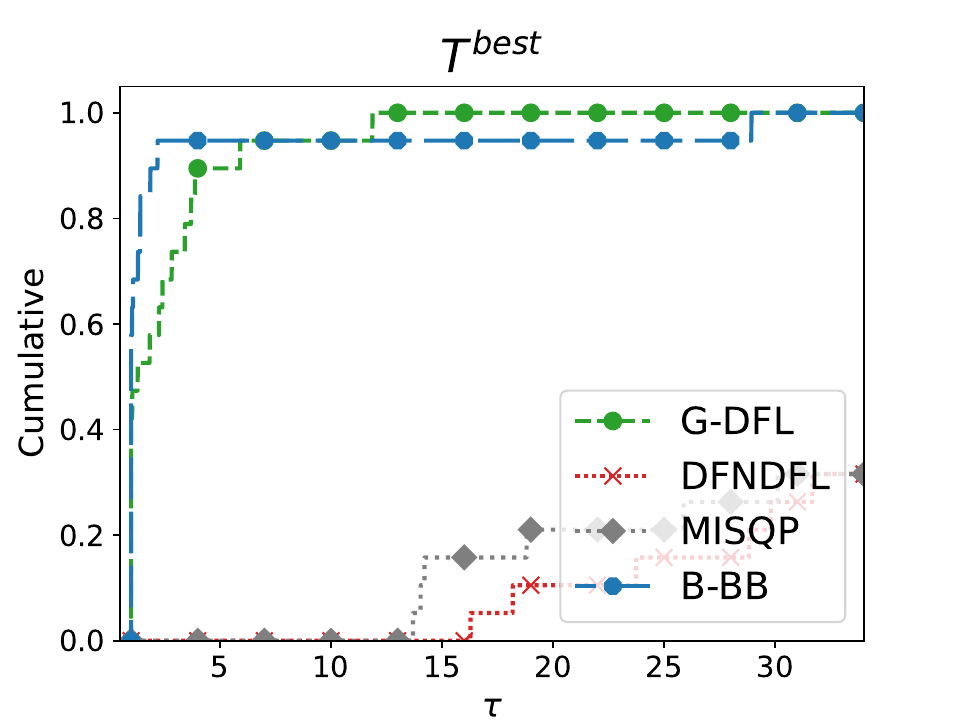}}
	\hfil
	\subfloat[Performance profiles where runtime is considered infinite whenever a solver does not reach the best solution for a specific problem.\label{fig::tfb_b}]{\includegraphics[width=0.4\textwidth]{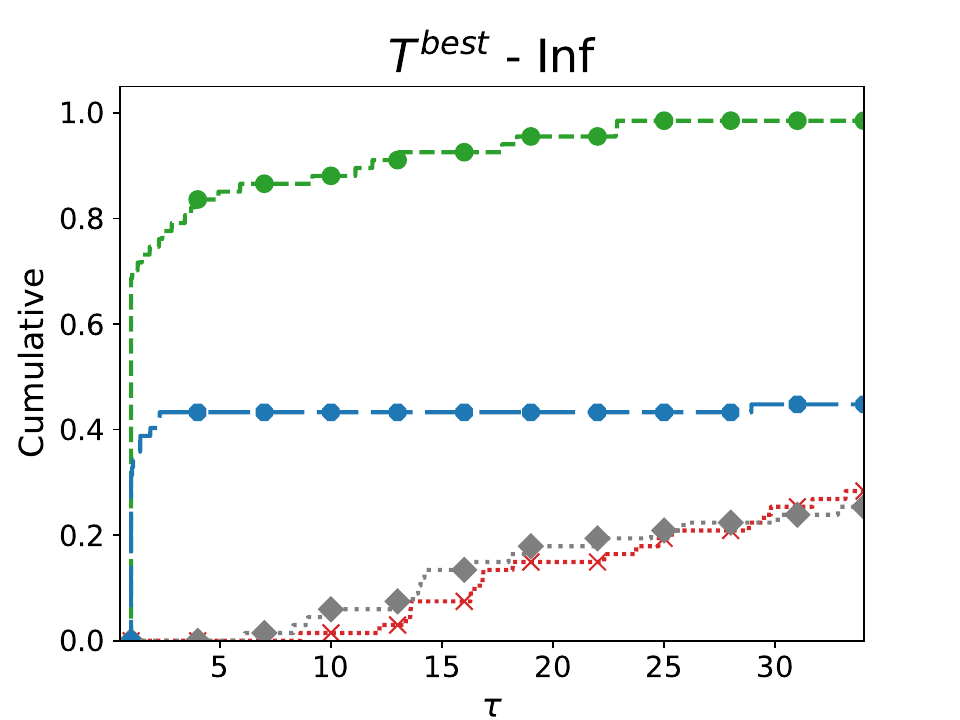}}
	\caption{Performance profiles w.r.t.\ $T^{best}$ for \texttt{G-DFL}, \texttt{DFNDFL} and \texttt{B-BB}.}
	\label{fig::tfb}
\end{figure}

\subsection{Effectiveness Analysis}
In this last section of the computational experiments, we carry out a detailed comparison in terms of the values of $f^\star$, reporting in Figures \ref{fig::obj_DFN}, \ref{fig::obj_Bonmin} and \ref{fig::obj_MISQP} the cumulative distributions of the relative gap to the best value for \texttt{G-DFL} compared to \texttt{DFNDFL}, \texttt{B-BB} and \texttt{B-BB-T}, and \texttt{MISQP}, respectively. In each case, we considered the results on all the benchmark problems and then the scenario restricted to the high-dimensional instances ($N > 500$). 

\begin{figure}[htbp]
	\centering
	\subfloat[Entire benchmark]{\includegraphics[width=0.4\textwidth]{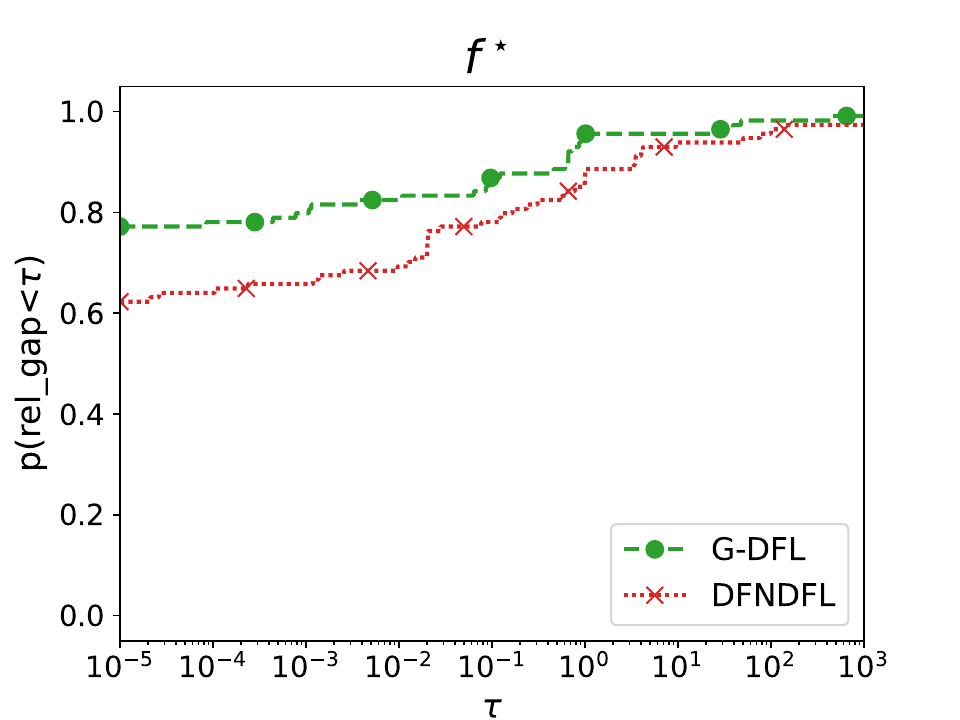}}
	\hfil
	\subfloat[$N > 500$]{\includegraphics[width=0.4\textwidth]{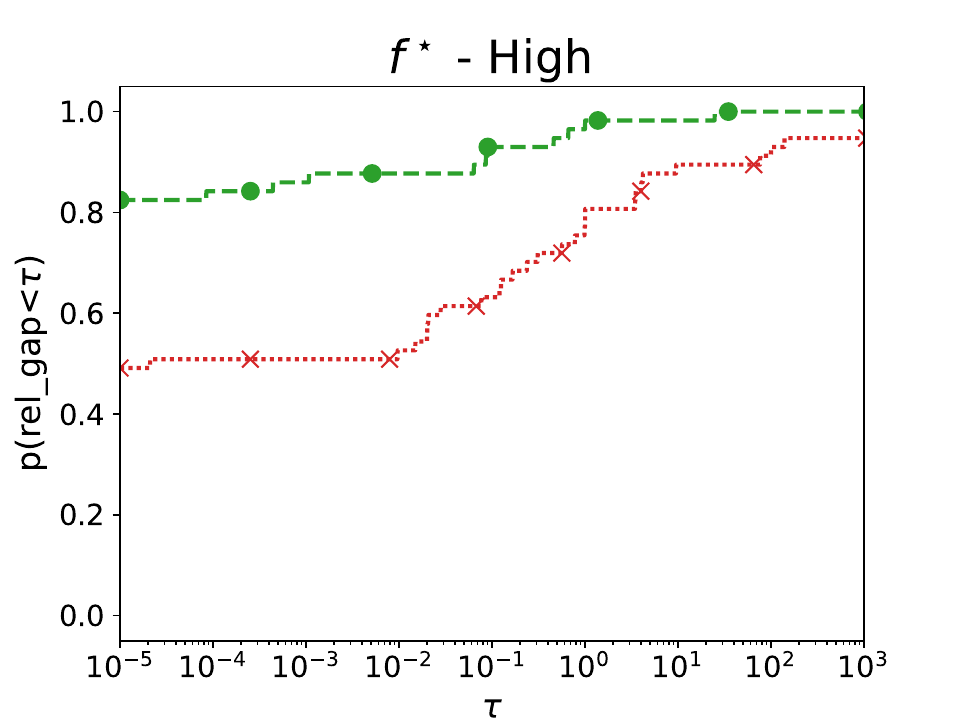}}
	\caption{Cumulative distributions of the relative gap to the best function value attained by \texttt{G-DFL} and \texttt{DFNDFL}.}
	\label{fig::obj_DFN}
\end{figure}

\begin{figure}[ht]
	\centering
	\subfloat[Entire benchmark]{\includegraphics[width=0.4\textwidth]{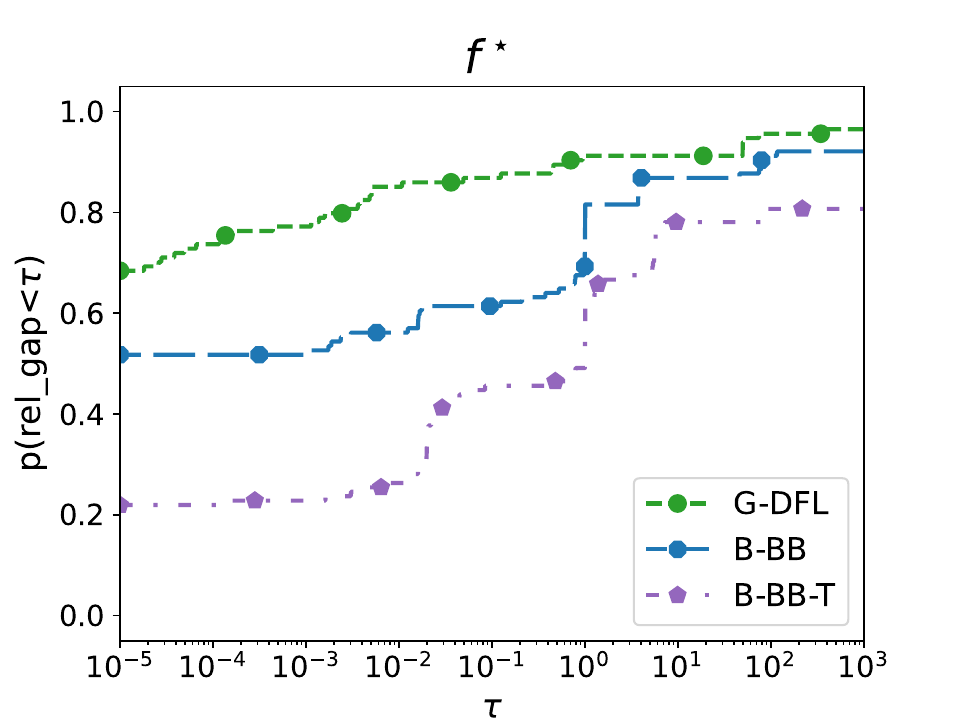}}
	\hfil
	\subfloat[$N > 500$]{\includegraphics[width=0.4\textwidth]{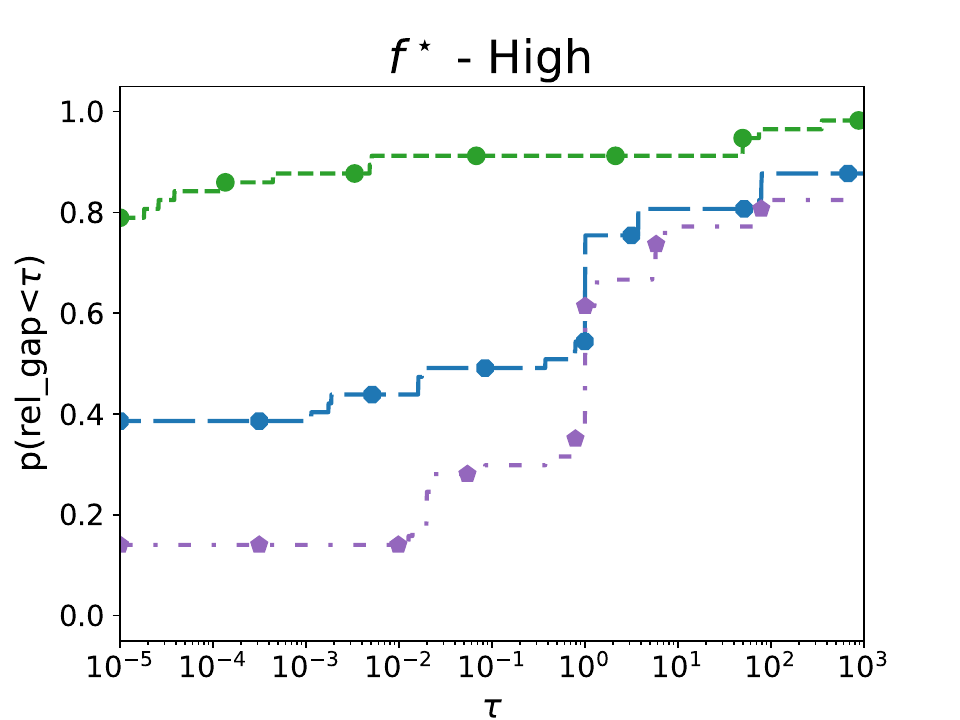}}
	\caption{Cumulative distributions of the relative gap to the best function value attained by \texttt{G-DFL}, \texttt{B-BB} and \texttt{B-BB-T}.}
	\label{fig::obj_Bonmin}
\end{figure}

\begin{figure}[ht]
	\centering
	\subfloat[Entire benchmark]{\includegraphics[width=0.4\textwidth]{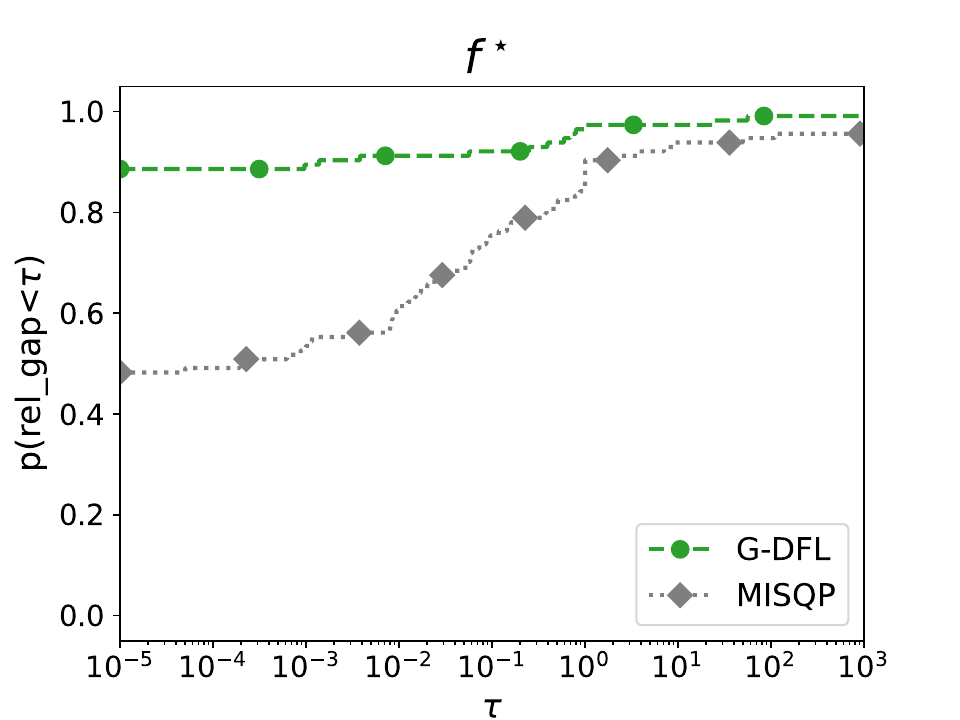}}
	\hfil
	\subfloat[$N > 500$]{\includegraphics[width=0.4\textwidth]{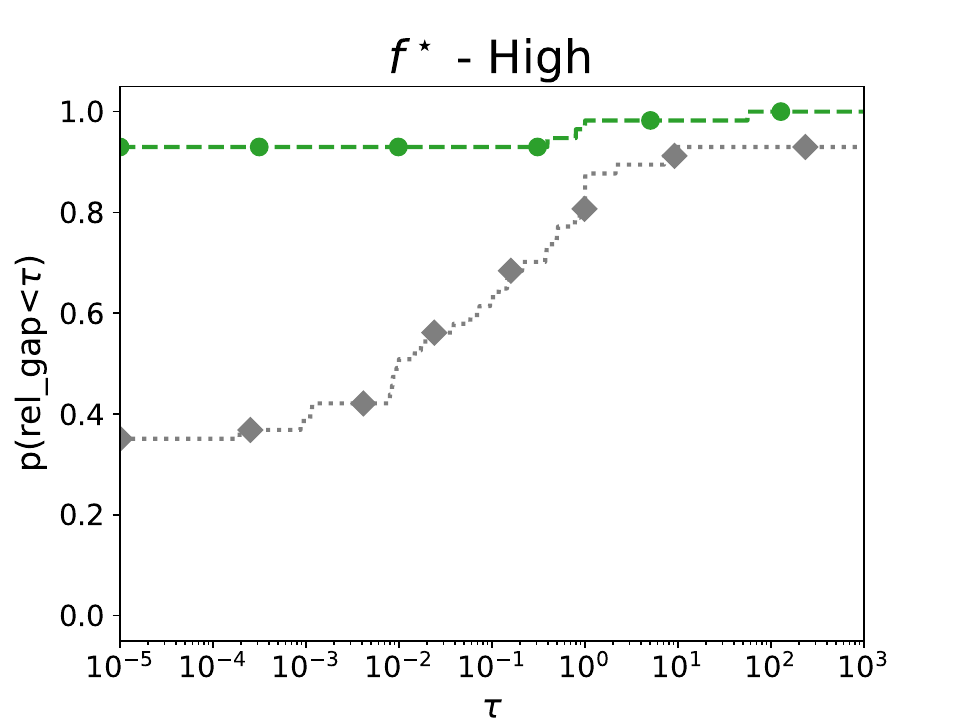}}
	\caption{Cumulative distributions of the relative gap to the best function value attained by \texttt{G-DFL} and \texttt{MISQP}.}
	\label{fig::obj_MISQP}
\end{figure}

Overall, \texttt{G-DFL} appears superior to all of the considered competitors. If we focus on cases where the number of variables becomes very large, the advantage becomes even more significant. It is particularly interesting to note that the performance of \texttt{B-BB} dramatically drops if relaxation of the integer variables is not possible (i.e, if we consider \texttt{B-BB-T}).

\section{Conclusions}
\label{sec:conclusion}
In this paper we proposed an algorithmic framework for bound-constrained mixed-integer nonlinear optimization. The distinctive characteristic of the method is that of exploiting, in an alternate fashion, descent steps along gradient-based directions to update continuous variables and steps along primitive directions to update integer variables. The algorithm can be used without any need of relaxing the integrality constraint throughout the process.

We provide for the method a solid theoretical analysis, showing that the sequence of produced solutions has at least a limit point satisfying a suitable necessary optimality condition. This property is shown to actually hold true when the most common solvers are employed for the update of continuous variables. 

Moreover, we carried out thorough computational experiments assessing i) the superiority of the method with respect to a derivative-free counterpart, encouraging the use of first-order information when possible, and ii) the good performance of the algorithm compared to popular mixed-integer nonlinear solvers, especially when the number of continuous variables is large or when integer variables are unrelaxable.

Future work might focus on obtaining stronger convergence properties for the algorithm without losing efficiency in practice. Moreover, the employment of second-order information on the continuous variables might also be considered.

\section*{Disclosure statement}

The authors report there are no competing interests to declare.

\section*{Notes on contributors}

\textbf{Matteo Lapucci} received his bachelor and master's degree in Computer Engineering at the University of Florence in 2015 and 2018 respectively. He then received in 2022 his PhD degree in Smart Computing jointly from the Universities of Florence, Pisa and Siena. He is currently Assistant Professor at the Department of Information Engineering of the University of Florence. His main research interests include theory and algorithms for sparse, multi-objective and large scale nonlinear optimization.
\\\\
\textbf{Giampaolo Liuzzi} is Associate Professor of operations research in the Department of Computer, Control, and Management Engineering at the University of Rome "La Sapienza." His research activities are primarily focused on the study of models for decision analysis and mathematical optimization, the study and development of methods for solving industrial problems, and the design of methods for machine learning.
\\\\
\textbf{Stefano Lucidi} received the M.S.\ degree in Electronic Engineering from the University of Rome ``La Sapienza'', Italy, in 1980. From 1982 to 1992 he was Researcher at the Istituto di Analisi e dei Sistemi e Informatica of the National Research Council of Italy. From September 1985 to May 1986 he was Honorary Fellow of the Mathematics Research Center of University of Wisconsin, Madison, USA. From 1992 to 2000 he was Associate Professor of Operations Research of University of Rome ``La Sapienza''. Since 2000 he is Full Professor of Operations Research of the University of Rome ``La Sapienza''. He teaches courses on Operations Research and Global Optimization methods in Engineering Management. He belongs to the Department of Computer, Control, and Management Engineering ``Antonio Ruberti'' of the University of Rome ``La Sapienza''. From November 2010 to October 2012, he was the Director of the PhD Programme in Operations Research of the University of Rome ``La Sapienza''. His research interests are mainly focused on the study, definition and application of nonlinear optimization methods and algorithms.
\\\\
\textbf{Pierluigi Mansueto} received his bachelor and master's degree in Computer Engineering at the University of Florence in 2017 and 2020, respectively. He then obtained his PhD degree in Information Engineering from the University of Florence in 2024. Currently, he is a Postdoctoral Research Fellow at the Department of Information Engineering of the University of Florence. His main research interests are multi-objective optimization and global optimization.

\section*{ORCID}

Matteo Lapucci: 0000-0002-2488-5486\\
Giampaolo Liuzzi: 0000-0002-4063-8370\\
Stefano Lucidi: 0000-0003-4356-7958\\
Pierluigi Mansueto: 0000-0002-1394-0937

\bibliographystyle{abbrv}

\appendix
\section{L-BFGS-B first step is gradient-related}
\label{app:L-BFGS-B}
We show in this section that the assumptions for convergence results of \texttt{G-DFL} are indeed satisfied if \texttt{L-BFGS-B} is employed as optimizer in the continuous variables update step.

As noted in Remark \ref{remark:multiple_steps}, once a gradient-related step is carried out at each iterations, any additional optimization of continuous variables can be conceptually embedded within step \ref{line::new_xk+1} of Algorithm \ref{alg:GradDFL}, as long as the objective value is nonincreasing.
Therefore, focusing on the first iteration of \texttt{L-BFGS-B} can be sufficient to study the convergence.

Actually, \texttt{L-BFGS-B} employs a Wolfe line search at each iteration; the Armijo condition is thus certainly satisfied. It remains to understand whether the first direction is gradient-related. We show hereafter that the direction actually coincides with that of the gradient projection method, which is well-known to be gradient-related.

\texttt{L-BFGS-B} makes a sequence of operations to define the search direction, that are summarized here below:
\begin{enumerate}[(i)]
	\item given a positive definite matrix $B_k$, define the \textit{quadratic model} $$m_k(x) = f(x_k) + \nabla f(x_k)^\top (x-x_k) + \frac{1}{2}(x-x_k)^\top B_k(x-x_k);$$
	\item let $x(t) = \text{Proj}_{\Omega_x}[x_k-t\nabla f(x_k)]$ be the \textit{piece-wise linear path} in $\mathbb{R}^n$ obtained projecting $x_k-t\nabla f(x_k)$ onto the box $\Omega_x$ for values of $t\in[0,\infty)$; 
	\item find the \textit{first local minimizer} of $m_k(x)$ along the path $x(t)$; we thus obtain the \textit{Cauchy point} $$x_c^k = \text{Proj}_{\Omega_x}[x_k-t^\star\nabla f(x_k)],$$ where
	$$t^\star = \min\{t\ge 0\mid t \text{ is a local minimizer of } m_k(x(t))\};$$ 
	\item fixing the active constraints and ignoring the other constraints, (approximately) solve
	\begin{equation}
		\label{eq:subprob_lbfgsb}
		\begin{aligned}
			\min_{x}\;&m_k(x)
			\\ \text{s.t. }& x_i = (x_c^k)_i \text{ for all } i: (x_c^k)_i=l_i \text{ or } (x_c^k)_i=u_i;
		\end{aligned}
	\end{equation}
	the resulting point $\tilde{x}_k$ can be obtained by an iterative algorithm starting at $x_c^k$ and must satisfy $m_k(\tilde{x}_k)\le m_k(x_c^k)$;
	\item the path from $x_c^k$ to $\tilde{x}_k$ is truncated so as to obtain a feasible solution $\hat{x}_k$;
	\item the search direction is given by $\hat{x}_k-x_k$.
\end{enumerate}

We now consider the special case of the first iteration of \texttt{L-BFGS-B}; the peculiarity of this iteration is that $B_k=\gamma I$, $\gamma>0$; the quadratic model is therefore given by
$m_0(x) = f(x_0)+\nabla f(x_0)^\top (x-x_0) + \frac{\gamma}{2}\|x-x_0\|^2$. Let us now consider the problem $\min_{x\in\Omega_x} m_0(x)$. We have
\begin{align*}
	\arg\min_{x\in\Omega_x} \; m_0(x) &= \arg\min_{x\in\Omega_x} \; f(x_0)+\nabla f(x_0)^\top (x-x_0) + \frac{\gamma}{2}\|x-x_0\|^2\\
	& =\arg\min_{x\in\Omega_x}\;  \frac{2}{\gamma} \nabla f(x_0)^\top (x-x_0) + \|x-x_0\|^2\\
	& = \arg\min_{x\in\Omega_x}\; \frac{1}{\gamma^2}\|\nabla f(x_0)\|^2 + \frac{2}{\gamma} \nabla f(x_0)^\top (x-x_0) + \|x-x_0\|^2\\
	& = \arg\min_{x\in\Omega_x} \|x-x_0+\frac{1}{\gamma}\nabla f(x_0)\|^2\\& = \text{Proj}_{\Omega_x}[x_0-\frac{1}{\gamma}\nabla f(x_0)].
\end{align*}

We deduce that the (unique) global minimizer of the quadratic model $m_0(x)$ on the entire feasible region $\Omega_x$ actually lies in the piece-wise linear path $x(t)$ and is obtained for $t=\frac{1}{\gamma}$. We point out that this point is actually the first local minimizer of $m_0(x)$ along $x(t)$.

The path $x(t)$ is characterized by $q \le n$ breakpoints $0<t_1<\ldots<t_q$;  within each interval $(t_j, t_{j+1})$ function $p(x(t)) = \|x(t)-x_0+\frac{1}{\gamma}\nabla f(x_0)\|^2$, which is equivalent to $m_0(x(t))$ in optimization terms as shown above, is continuously differentiable; moreover, the values of a subset of variables $\mathcal{I}_j\subseteq \{1,\ldots,n\}$ are constantly fixed to the bounds, whereas for all $i\in \bar{\mathcal{I}}_j = \{1,\ldots,n\}\setminus \mathcal{I}_j$ we have $(x(t))_i = (x_0)_i-t\nabla_i f(x_0)$. We thus have
\begin{align*}
	\frac{\partial p(x(t))}{\partial t} &= \frac{\partial}{\partial t} \sum_{i=1}^{n}\left((x(t))_i-(x_0)_i+\frac{1}{\gamma}\nabla_i f(x_0)\right)^2\\&=\frac{\partial}{\partial t}\sum_{i\in \bar{\mathcal{I}}_j}\left((x_0)_i-t\nabla_i f(x_0)-(x_0)_i+\frac{1}{\gamma}\nabla_i f(x_0)\right)^2\\&=\sum_{i\in \bar{\mathcal{I}}_j}\frac{\partial}{\partial t} \left(\nabla_i f(x_0)\left(\frac{1}{\gamma}-t\right)\right)^2\\& = -2\left(\frac{1}{\gamma}-t\right)\sum_{i\in \bar{\mathcal{I}}_j}\nabla_if(x_0)^2.
\end{align*}
Since function $p(x(t))$ is convex in $(t_j,t_{j+1})$, the necessary and sufficient condition of optimality for $t$ in this interval is
$$0=\frac{\partial p(x(t))}{\partial t}=-2\left(\frac{1}{\gamma}-t\right)\sum_{i\in \bar{\mathcal{I}}_j}\nabla_if(x_0)^2,$$
i.e.,
$$t = \frac{1}{\gamma},$$
which is the unique globally optimal solution.
Thus, local, yet not global minimizers might only be found at the breakpoints.
Yet, we shall outline that the function is continuous at the breakpoints; moreover, even if the derivative does not exist at $t_j$, we have
\begin{gather*}
	\lim_{t\to t_j^-} \frac{\partial p(x(t))}{\partial t} = -2\left(\frac{1}{\gamma}-t_{j}\right)\sum_{i\in \bar{\mathcal{I}}_{j-1}}\nabla_if(x_0)^2,\\
	\lim_{t\to t_j^+} \frac{\partial p(x(t))}{\partial t} = -2\left(\frac{1}{\gamma}-t_{j}\right)\sum_{i\in \bar{\mathcal{I}}_j}\nabla_if(x_0)^2.  
\end{gather*}
At a local minimizer, the first limit cannot be strictly positive and the second limit cannot be strictly negative. Therefore, one of the following cases would hold:
\begin{itemize}
	\item $t_j = \frac{1}{\gamma}$, i.e., $t_j$ would again be the global minimizer;
	\item $t_j>\frac{1}{\gamma}$, i.e., the global minimizer appears along the path $x(t)$ before the considered local minimizer $t_j$;
	\item $t_j<\frac{1}{\gamma}$, with $$\sum_{i\in \bar{\mathcal{I}}_{j-1}}\nabla_if(x_0)^2\ge 0 \quad\text{ and }\quad \sum_{i\in \bar{\mathcal{I}}_{j}}\nabla_if(x_0)^2\le 0;$$ the second condition implies $\nabla_i f(x_0) = 0$ for all $i\in \bar{\mathcal{I}}_j$; but then, by definition of $x(t)$ and noting that $\bar{\mathcal{I}}_j \supset \bar{\mathcal{I}}_{h}$ for all $h>j$, we have $x(t) = x(t_j)$ for all $t>t_j$. Hence, $x(t_j) = x(\frac{1}{\gamma})$; the solution thus coincides once again with the global minimizer.
\end{itemize}

We have finally got that the Cauchy point will surely given by the global optimizer of the model on the entire feasible region and has the form $x_c^k = \text{Proj}_{\Omega_x}[x_0-\frac{1}{\gamma}\nabla f(x_0)]$; furthermore, $x_c^k$ is optimal for subproblem \eqref{eq:subprob_lbfgsb} and it is a feasible solution; thus, we have $x_c^k = \tilde{x}_k = \hat{x}_k$.

We have finally found that the direction at the first iteration  of \texttt{L-BFGS-B} is given by
$$d_0 = \text{Proj}_{\Omega_x}\left[x_0-\frac{1}{\gamma}\nabla f(x_0)\right]-x_0,$$
i.e., it is a gradient projection type direction, which is gradient-related.

\end{document}